\tikzstyle{none}=[inner sep=0pt]
\newcommand{\dfn}[1]{\emph{#1}}
\newcommand{\bS}{\mathbb S}
\newcommand{\R}{\mathbb{R}}
\newcommand{\bZ}{\mathbb Z} 
\newcommand{\Z}{\mathbb{Z}}
\newcommand{\rk}{{\rm rank} \,}
\renewcommand{\epsilon}{\varepsilon}
\newcommand{\om}{\omega}
\DeclareMathOperator{\wsp}{wsp}
\DeclareMathOperator{\ssp}{sp}
\newcommand{\lk}{\ell k}
\newcommand{\cfl}{\rm CFL^{\text{--}}}
\newtheorem{thm}{Theorem}[section]
\newtheorem{lemma}[thm]{Lemma}
\newtheorem{cor}[thm]{Corollary}
\newtheorem{rem}[thm]{Remark}
\newtheorem{prop}[thm]{Proposition}
\theoremstyle{definition}
\newtheorem{defn}[thm]{Definition}
\newtheorem{exa}[thm]{Example}
\numberwithin{equation}{section}
\numberwithin{equation}{section}
\begin{document}

\title{A note on the weak splitting number}

\author{Alberto Cavallo}
\address{Max Plank institute for Mathematics, Vivatsgasse 7, 53111 Bonn, Germany}
\email{cavallo@mpim-bonn.mpg.de}

\author{Carlo Collari}
\address{New York University Abu Dhabi, PO Box 129188, Saadiyat Island, Abu Dhabi, UAE}
\email{carlo.collari.math@gmail.com}

\author{Anthony Conway}
\address{Max Plank institute for Mathematics, Vivatsgasse 7, 53111 Bonn, Germany}
\email{anthonyyconway@gmail.com}
\thanks{This work started when the second-named author was visiting the first-named author in Bonn. All three authors are grateful to the Max Plank Institute in Bonn for its support and hospitality.
We thank Paolo Lisca and Chuck Livingston for helpful discussions.
We are indebted to an anonymous referee for pointing out a mistake in a previous version of this paper, and for their valuable comments.
}

\subjclass[2010]{57M27 (57M25)}

\begin{abstract} 
The weak splitting number $\wsp(L)$ of a link $L$ is the minimal number of crossing changes needed to turn $L$ into a split union of knots.
We describe conditions under which certain $\mathbb{R}$-valued link invariants give lower bounds on $\wsp(L)$.
This result is used both to obtain new bounds on $\wsp(L)$ in terms of the multivariable signature and to recover known lower bounds in terms of the $\tau$ and $s$-invariants.
We also establish new obstructions using link Floer homology and apply all these methods to compute $\wsp$ for all but two of the $130$ prime links with $9$ or fewer crossings.
\end{abstract}

\maketitle
\section{Introduction}

Given a link $L$, the \emph{weak splitting number} $\wsp(L)$ is the minimal number of crossing changes
needed to convert $L$ into a \emph{completely split link}, i.e. into a disjoint union of knots contained in pairwise disjoint balls.
This paper studies $\wsp(L)$ using a variety of link invariants, including signatures and the $J$-function from link Floer homology.

The weak splitting number, which was first introduced by Adams~\cite{Adams96}, must not be confused with the similarly defined \textit{splitting number} $\ssp(L)$; the definition of the latter only allows crossing changes between distinct components, which we call \dfn{mixed crossing changes}.
While the splitting number has been intensively studied \cite{BatsonSeed, BorodzikFriedlPowell, BorodzikGorsky, ChaFriedlPowell, CimasoniConwayZacharova, Livingston, Jeong}, the weak splitting number has so far attracted less attention~\cite{Adams96, BorodzikFriedlPowell, CavalloCollari, Shimizu}.  
Indeed, $\wsp(L)$ is harder to compute than $\ssp(L)$; one of the main reasons being that the isotopy type of the components of $L$ is not fixed under arbitrary crossing changes. 
We now review known methods to study $\wsp$ and describe new ones, using prime links with $9$ or fewer crossings to gauge their efficiency.

A first estimate on $\wsp$ is provided by the linking numbers: the sum of their absolute values is a lower bound.
Apart from this \emph{linking bound}, the multivariable Alexander polynomial $\Delta_L$ also gives rise to obstructions.
Indeed, Borodzik, Friedl and Powell \cite{BorodzikFriedlPowell} proved that if~$L$~is an~$\ell$-component link with~$\Delta_L\neq 0$, then
~$\ell-1 \leq \text{wsp}(L)$,
 and if equality is achieved, then $\Delta_L$ must factor~as
\begin{equation}
\label{eq:AlexanderObstruction}
\Delta_L(t_1,\ldots,t_\ell)=f \overline{f} \cdot \prod_{i=1}^\ell p_i(t_i) \cdot \prod_{i=1}^\ell (t_i-1)^{m_i}.
\end{equation}
Another method to compute $\wsp$ relies on~\emph{slice-torus link invariants}~\cite{LivingstonSliceTorus,Lewark, CavalloCollari}.
These are numerical concordance invariants that include Oszv\'ath and Szab\'o's $\tau$-invariant~\cite{OzsvathSzabo, CavalloTau}, and a normalisation of Rasmussen's $s$-invariant~\cite{Rasmussen, BeliakovaWehrli}.
More~precisely, the first two authors observed in~\cite{CavalloCollari} that if $\nu$ is a slice-torus link invariant and~$L=K_1 \cup\ldots\cup K_\ell$, then
\begin{equation}
\left\vert \nu(L) - \sum_{i=1}^{\ell} \nu(K_i) \right\vert \leq \wsp(L).
\label{eq:nu-bound}
\end{equation}
Together with the linking bound and Borodzik, Friedl and Powell's \emph{Alexander obstruction},
the \emph{slice-torus bound} in \eqref{eq:nu-bound} allow us to determine the weak splitting number of $114$ out of the $130$ prime links with $9$ or fewer crossings; see Table~\ref{tab:small_linksI}.

In order to determine the remaining cases, we develop novel lower bounds and obstructions.
Firstly, we observe that the multivariable signature $\sigma_L$ and nullity $\eta_L$ of Cimasoni-Florens~\cite{CimasoniFlorens} can be leveraged to provide lower bounds on the weak splitting number.
\begin{thm}
\label{thm:SignatureIntro}
If $L = K_1 \cup \ldots \cup K_\ell$ is an oriented link and~$\underline{\omega}=(\omega_1,\ldots,\omega_\ell)\in (\mathbb{S}^1)^\ell$, then the following inequality holds:
\begin{small}
\begin{equation*}
\left\vert \sigma_L(\underline{\omega}) - \sum_{i=1}^{\ell} \sigma_{K_i}(\omega_i) \right\vert + \left\vert \eta_L(\underline{\omega}) - \sum_{i=1}^{\ell} \eta_{K_i}(\omega_i) - \ell + 1\right\vert + 3\sum_{i<j} \vert \ell k (K_i,K_j)\vert \leq 4 \wsp(L). 
\label{eq:sigma-bound}
\end{equation*}
\end{small}
\end{thm}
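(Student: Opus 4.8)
The plan is to fix an optimal sequence of $n:=\wsp(L)$ crossing changes turning $L$ into a completely split link $L'=K_1'\cup\ldots\cup K_\ell'$, and to control the three summands on the left-hand side separately. I would partition this sequence into the $s_i$ \emph{self-crossing changes} occurring on the $i$-th component and the $m_{ij}$ \emph{mixed crossing changes} between the $i$-th and $j$-th components, writing $s:=\sum_i s_i$ and $m:=\sum_{i<j}m_{ij}$, so that $n=s+m$. Two observations are then used repeatedly. First, a mixed crossing change does not alter the isotopy type of any single component, so $K_i'$ is obtained from $K_i$ by precisely the $s_i$ self-crossing changes carried out on that component. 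Second, each mixed crossing change between components $i$ and $j$ changes $\lk(K_i,K_j)$ by $\pm1$, while $L'$ is split; hence $m_{ij}\geq|\lk(K_i,K_j)|$, and therefore $3\sum_{i<j}|\lk(K_i,K_j)|\leq 3m$, which disposes of the third summand.

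Next I would compare $L$ with $L'$ through the split-union formulas for the Cimasoni--Florens invariants~\cite{CimasoniFlorens}, namely $\sigma_{L'}(\underline\omega)=\sum_i\sigma_{K_i'}(\omega_i)$ and $\eta_{L'}(\underline\omega)=\sum_i\eta_{K_i'}(\omega_i)+\ell-1$, whose one-component contributions are the Levine--Tristram signature and nullity. Substituting these into the first two summands and applying the triangle inequality yields
\[
\bigl|\sigma_L-\textstyle\sum_i\sigma_{K_i}\bigr|+\bigl|\eta_L-\textstyle\sum_i\eta_{K_i}-\ell+1\bigr|\leq\bigl(|\sigma_L-\sigma_{L'}|+|\eta_L-\eta_{L'}|\bigr)+\sum_i\bigl(|\sigma_{K_i'}-\sigma_{K_i}|+|\eta_{K_i'}-\eta_{K_i}|\bigr),
\]
where all invariants are evaluated at the relevant entries of $\underline\omega$. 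The first bracket measures the total change of the multivariable invariants along the cobordism from $L$ to $L'$, and the sum measures the change of the Levine--Tristram invariants of each component under its $s_i$ self-crossing changes.

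The heart of the proof is the behaviour of the pair $(\sigma,\eta)$ under a single crossing change. The key inputs are the combined estimates $|\sigma_{L_+}(\underline\omega)-\sigma_{L_-}(\underline\omega)|+|\eta_{L_+}(\underline\omega)-\eta_{L_-}(\underline\omega)|\leq 2$ for a self-crossing change and $\leq 1$ for a mixed crossing change; the case $\ell=1$ of the first estimate also bounds each component term by $2s_i$. Summing these estimates along the chosen sequence gives $|\sigma_L-\sigma_{L'}|+|\eta_L-\eta_{L'}|\leq 2s+m$ and $\sum_i\bigl(|\sigma_{K_i'}-\sigma_{K_i}|+|\eta_{K_i'}-\eta_{K_i}|\bigr)\leq 2s$. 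Combining this with the linking bound $3\sum_{i<j}|\lk(K_i,K_j)|\leq 3m$ produces the total $(2s+m)+2s+3m=4s+4m=4n$, which is exactly $4\wsp(L)$.

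I expect the main obstacle to be the sharp \emph{mixed} crossing-change inequality: a mixed crossing change must change the combined quantity $|\sigma|+|\eta|$ by at most $1$, whereas a self-crossing change may change it by $2$. This asymmetry is precisely what makes the coefficient $3$ in front of the linking terms compatible with the global constant $4$, so it cannot be dispensed with. Establishing it should proceed, in the spirit of~\cite{CimasoniConwayZacharova}, by realizing the mixed crossing change as a local modification of a C-complex for $L$ supported near a single clasp, and then tracking the resulting controlled (essentially rank-one) change of the Hermitian form $H(\underline\omega)$ whose signature and nullity define $\sigma_L$ and $\eta_L$; additional care will be needed at the roots of the Alexander polynomial, where the nullity jumps, and at the boundary values $\omega_i=1$.
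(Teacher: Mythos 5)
Your proof is correct, and it rests on exactly the same three external inputs as the paper's: the crossing-change estimates for the pair $(\sigma,\eta)$ (total variation at most $2$ for a self crossing change, at most $1$ for a mixed one, quoted from \cite{CimasoniConwayZacharova,BenardConway,CimasoniFlorens}), the split-union formulas $\sigma_{\sqcup K_i}=\sum_i\sigma_{K_i}$ and $\eta_{\sqcup K_i}=\sum_i\eta_{K_i}+\ell-1$, and the linking estimate $m_{ij}\geq|\lk(K_i,K_j)|$; your tally $(2s+m)+2s+3m=4s+4m$ closes the argument. The bookkeeping, however, is genuinely different. The paper does not compare $L$ with the terminal split link and each $K_i$ with $K_i'$ separately; instead it factors the theorem through a general template (Theorem~\ref{prop:main_result1} via Proposition~\ref{prop:uniformly_bounded}), tracking the single defect $i(J)=I(J)-I(\bigsqcup_i J_i)$ along every step of the splitting sequence. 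There, a self crossing change on $L^{(j-1)}$ induces a crossing change \emph{of the same sign} on the split union of its components, so the two contributions partially cancel and each self crossing change moves $i$ by at most $b-a$; an $\underline{\epsilon}$-sign trick then upgrades the bound on $\sum_j\epsilon_jI_j$ to one on $\sum_j|I_j(L)-I_j(\bigsqcup K_i)|$. Your direct triangle-inequality decomposition dispenses with both devices and still lands on the constant $4$ only because the self-crossing interval for $(\sigma,\eta)$ is symmetric ($-a=b=\Delta=2$), so the worst-case sum of the link-level and component-level contributions is $2+2=b-a$; for an asymmetric invariant such as a slice-torus invariant (interval $[0,1]$) the same naive addition would cost $2$ per self crossing change instead of $1$ and would fail to recover Corollary~\ref{cor:ACbound}. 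In short, you give a cleaner self-contained proof of this particular bound, while the paper's extra machinery buys a reusable template; you also correctly identify the mixed-crossing inequality as the crux, which both you and the paper ultimately defer to the cited references.
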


While this \emph{signature bound} has appeared in the (unpublished) PhD thesis of the third named author~\cite{ConwayThesis}, here we provide an alternative proof.
To state the theorem on which this alternative proof is based, and which is one of the main results of this paper, we introduce some terminology.
A \dfn{self crossing change} is a crossing change that involves only one component.
If a link is oriented, a \emph{positive crossing change} is one that changes a negative crossing into a positive crossing.
\begin{thm}\label{prop:main_result1}
Let $I$ be an $\R$-valued oriented link invariant. Suppose there exists~$a,b,b'\in~\R$ such that
\begin{equation}
\label{eq:self}
I(L)-I(L^\prime) \in [a,b]\quad \text{or} \quad I(L)-I(L^\prime) \in [-b',b']
\end{equation}
depending on whether $L$ and $L'$ are related by a positive self crossing change or a mixed crossing change. 
If $\delta := (b-a)-b^\prime \geq 0$, then for each oriented link $L=K_1 \cup \ldots \cup K_\ell$
 \begin{equation}
\label{eq:SkeinBound}
 \left\vert I(L) - I\left(\bigsqcup_{i=1}^\ell K_{i}\right) \right\vert + \delta \sum_{i=1}^{\ell} \vert \lk(K_i,K_j) \vert \leq (b-a) \wsp(L).
\end{equation}
\end{thm}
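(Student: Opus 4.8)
The plan is to track how $I$ changes along a sequence of crossing changes that first realises $\wsp(L)$ and is then continued back to the \emph{original} split union $\bigsqcup_{i} K_i$. First I would fix an optimal sequence of $\wsp(L)$ crossing changes turning $L$ into a completely split link $\bigsqcup_{i=1}^\ell K_i'$, and record how many are positive self crossing changes ($p$), negative self crossing changes ($q$), and mixed crossing changes ($m$), so that $p+q+m=\wsp(L)$. The key observation I would use is that a mixed crossing change leaves the knot type of each individual component unchanged; hence $K_i'$ is obtained from $K_i$ purely through the self crossing changes performed on the $i$-th component, and the knot $K_i'$ can be brought back to $K_i$ by reversing exactly those self crossing changes inside the ball containing $K_i'$.

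This suggests a \emph{round trip}: append to the optimal sequence the reversal just described, producing a sequence of crossing changes running from $L$ all the way to $\bigsqcup_i K_i$. Reversing a positive self crossing change is a negative one and vice versa, so the appended portion consists of $q$ positive and $p$ negative self crossing changes; over the whole round trip there are therefore exactly $p+q$ positive and $p+q$ negative self crossing changes, together with the $m$ mixed ones. I would then estimate $\left\vert I(L)-I(\bigsqcup_i K_i)\right\vert$ by the triangle inequality, feeding in the hypotheses \eqref{eq:self}: each positive self crossing change contributes a number in $[-b,-a]$, each negative one a number in $[a,b]$, and each mixed one a number in $[-b',b']$. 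The point of balancing the signs is that the $p+q$ positive and $p+q$ negative self contributions combine into a single interval of half-width $(p+q)(b-a)$, rather than the weaker $(p+q)\max(\vert a\vert,\vert b\vert)$ one would get by stopping at $\bigsqcup_i K_i'$.

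Collecting the estimates gives $\left\vert I(L)-I(\bigsqcup_i K_i)\right\vert\le (p+q)(b-a)+mb'$, and rewriting $p+q=\wsp(L)-m$ turns the right-hand side into $(b-a)\wsp(L)-m\delta$ with $\delta=(b-a)-b'$. To finish I would invoke the linking bound $m\ge\sum_{i<j}\vert\lk(K_i,K_j)\vert$, which holds because each mixed crossing change alters a single linking number by $\pm 1$ while the split link has all linking numbers zero; since $\delta\ge 0$ this yields $m\delta\ge \delta\sum_{i<j}\vert\lk(K_i,K_j)\vert$, and hence \eqref{eq:SkeinBound}.

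The triangle-inequality bookkeeping and the linking bound are routine. The step I expect to require the most care is the justification of the round trip: namely, that the component knot types are governed solely by the self crossing changes, so that reversing those changes genuinely returns the original $\bigsqcup_i K_i$, and that this reversal flips each sign. This sign balancing is precisely what produces the coefficient $(b-a)$ rather than an asymmetric constant, and it is the crux of the argument; the hypothesis $\delta\ge 0$ then enters only to pass from $m$ to the sum of linking numbers.
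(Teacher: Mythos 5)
Your argument is correct, and it reaches the paper's key intermediate estimate $\bigl\vert I(L)-I\bigl(\bigsqcup_i K_i\bigr)\bigr\vert\le (b-a)s+b'm$ (with $s=p+q$) by a different bookkeeping. The paper does not append a return trip: it tracks the quantity $i(J):=I(J)-I\bigl(\bigsqcup_i J_i\bigr)$ along the splitting sequence $L=L^{(0)},\ldots,L^{(k)}$ and bounds each increment $i(L^{(j)})-i(L^{(j-1)})$ directly, applying the hypothesis \emph{twice} at every self crossing change --- once to the links and once to the split unions of their components, which differ by a self crossing change of the same type --- so that the two asymmetric intervals $[a,b]$ and $[-b,-a]$ combine into the symmetric $[-(b-a),b-a]$ at each step; at a mixed crossing change the split unions coincide and the increment lies in $[-b',b']$. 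Your round-trip concatenation realises exactly the same cancellation, just reorganised: the ``second application'' per self crossing change becomes the reversed crossing change appended at the end. Notably, your version is precisely the mechanism of the paper's Lemma~\ref{lem:crossing change weak} (used there to prove Theorem~\ref{thm:newobstructionHFL}), so your proof in effect derives Theorem~\ref{prop:main_result1} from that lemma plus a telescoping estimate. The two routes require the same geometric input --- that the knot type of each component changes only at self crossing changes on that component, and that such a crossing change (or its reverse, with flipped sign) can be realised on the split-off copy inside its ball --- and they finish identically via Lemma~\ref{lem:pre_linking_bound} and $\delta\ge 0$. The paper's parallel-tracking formulation is slightly more economical in that it never has to exhibit an explicit path back to $\bigsqcup_i K_i$, while yours makes the sign-balancing, which you rightly identify as the crux, more visible. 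One cosmetic point: the sum in \eqref{eq:SkeinBound} should read $\sum_{i<j}$, as you implicitly (and correctly) assume.
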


Theorem~\ref{prop:main_result1} thus provides a template to produce lower bounds on $\wsp$.
As applications, we recover the slice-torus and signature bounds; see Corollary~\ref{cor:Jfunction} for a third application.

Before returning to links with $9$ or fewer crossings, we pause and compare the lower bounds that we have obtained so far. 
One might expect the
obstructions from link homology theories to be more powerful than classical invariants. The next proposition shows that this is not always the case (cf.~Propositions~\ref{prop:sngeqst} and~\ref{prop:stgeqsn}, and Remark~\ref{rem:linking is better}), answering a question posed in \cite[Remark~1.5]{CavalloCollari}.

\begin{prop}
The linking bound, the slice-torus bound, and the signature bound are independent.  
More precisely, for each of the above bounds there are infinitely many links for which the given bound is sharper than the other two. Moreover, the difference between the values of any two among the above-mentioned bounds can be arbitrarily high.
\end{prop}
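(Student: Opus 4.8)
The plan is to establish the three assertions---pairwise independence, existence of infinitely many witnessing links, and unboundedness of the gaps---simultaneously, by producing for each bound a single \emph{base link} on which that bound strictly dominates the other two, and then amplifying the gap by iterated connected sum. Write $\Lambda(L)=\sum_{i<j}|\ell k(K_i,K_j)|$ for the linking bound, $\beta_{\mathrm{st}}(L)=\max_\nu|\nu(L)-\sum_i\nu(K_i)|$ for the slice-torus bound (the maximum taken over the slice-torus invariants appearing in \eqref{eq:nu-bound}), and $\beta_\sigma(L)$ for the best value over $\underline\omega$ of the left-hand side of the inequality in Theorem~\ref{thm:SignatureIntro}, divided by $4$. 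Once base links $L_{\mathrm{lk}},L_{\mathrm{st}},L_\sigma$ are found, each with its named bound strictly exceeding the other two, the proposition follows from the amplification step below.

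The key observation is that every quantity entering the three bounds is \emph{additive under connected sum}. If $L\# L'$ denotes the connected sum of two oriented links performed along one chosen component, with components of different summands left unlinked, then $\Lambda$ is additive, the multivariable signature and nullity satisfy a Novikov-type additivity $\sigma_{L\# L'}(\underline\omega)=\sigma_L(\underline\omega)+\sigma_{L'}(\underline\omega)$ and $\eta_{L\# L'}(\underline\omega)=\eta_L(\underline\omega)+\eta_{L'}(\underline\omega)$ for matching characters on the summed component, and each slice-torus invariant satisfies $\nu(L\# L')=\nu(L)+\nu(L')$. A short bookkeeping check---using that connected sum along one component sends $\ell,\ell'$ components to $\ell+\ell'-1$---shows that the signature defect $\sigma_L-\sum_i\sigma_{K_i}$, the corrected nullity defect $\eta_L-\sum_i\eta_{K_i}-\ell+1$, and each slice-torus defect $\nu(L)-\sum_i\nu(K_i)$ are all additive as well. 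Consequently each of $\Lambda$, $\beta_{\mathrm{st}}$ and $\beta_\sigma$ is additive, so the $n$-fold self-connected-sum $L^{\# n}$ (all summations along the same component, using the same optimal $\underline\omega$ on each copy so that no absolute-value cancellation occurs) satisfies $\Lambda(L^{\# n})=n\Lambda(L)$, and likewise for $\beta_{\mathrm{st}}$ and $\beta_\sigma$. Hence if one bound exceeds the other two on the base link by $c>0$, the same holds on $L^{\# n}$ with gap $nc$; as the links $L^{\# n}$ are pairwise non-isotopic, this yields infinitely many witnesses and unbounded gaps at once.

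It remains to exhibit the three base links. For $L_\sigma$ I would take a two-component link of linking number zero whose multivariable signature or nullity defect is nonzero while its slice-torus defects vanish---a suitable Whitehead-type link---so that $\beta_\sigma>0=\Lambda=\beta_{\mathrm{st}}$. For $L_{\mathrm{st}}$ I would take a link, again of linking number zero and with vanishing signature and nullity defects, whose $\tau$- or $s$-defect is nonzero, such as one built from positively-clasped untwisted Whitehead doubles, where the slice-torus invariants detect what the signature cannot. For $L_{\mathrm{lk}}$ I would take a family with large linking number but with $|\sigma_L-\sum_i\sigma_{K_i}|+|\eta_L-\sum_i\eta_{K_i}-\ell+1|$ kept strictly below $\Lambda$ at every $\underline\omega$---recall that, because of the $3\Lambda$ term, $\Lambda$ beats $\beta_\sigma$ exactly when these two defects sum to less than $\Lambda$---and with small slice-torus defect; the explicit examples behind Remark~\ref{rem:linking is better} serve this purpose. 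The three bounds are then verified on each base link by direct computation.

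The main obstacle is this last step, and specifically the linking-dominant family: one must construct links whose Cimasoni--Florens signature and nullity defects remain uniformly small while the linking number grows, which amounts to controlling a global invariant against a purely homological one. Computing $\beta_\sigma$ also requires optimizing over the character $\underline\omega\in(\bS^1)^\ell$ and treating the points where the nullity jumps, rather than merely evaluating at $(-1,\ldots,-1)$. By contrast, the additivity step is conceptually routine but must be carried out with care for the signature, where the characters on the summed component have to be matched, and for the nullity, where the $-\ell+1$ correction is precisely what makes the defect---and hence the bound---genuinely additive.
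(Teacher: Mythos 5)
Your overall strategy---find one base link per bound and amplify by connected sum---matches the paper's treatment of two of the three cases (both the signature-dominant and the linking-dominant families in Proposition~\ref{prop:sngeqst} and Remark~\ref{rem:linking is better} are built by connect-summing copies of $L10a129$). But there are two genuine gaps. First, your claim that each slice-torus link invariant satisfies $\nu(L\#L')=\nu(L)+\nu(L')$ is not available: for \emph{links} (as opposed to knots) only the sub-additivity $\nu(L_1)+\nu(L_2)-1\leq\nu(L_1\#L_2)\leq\nu(L_1)+\nu(L_2)$ is known (\cite[Remark~2.8]{CavalloCollari}, used in the paper for exactly this reason). Sub-additivity suffices for the \emph{upper} bounds on the slice-torus defect needed in the signature- and linking-dominant families, but it wrecks the amplification for the slice-torus-dominant family, where you need a \emph{lower} bound on $\beta_{\mathrm{st}}(L^{\#n})$: you only get $n\beta_{\mathrm{st}}(L)-(n-1)$, so the gap over $\beta_\sigma$ grows only if $\beta_{\mathrm{st}}(L)-1>\beta_\sigma(L)$, a strictly stronger requirement than dominance on the base link. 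The paper avoids this entirely by taking the twist family $L_t$ of $2$-bridge links, computing $\nu(L_t)=t$ directly from \cite[Theorem~1.3]{CavalloCollari} and bounding the signature side by the rank $2t-1$ of $H_1$ of an explicit C-complex.

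Second, your linking-dominant family does not work as described. Your criterion is correct for the bare linking-number bound $\Lambda$: it beats $\beta_\sigma$ exactly when the signature and nullity defects sum to less than $\Lambda$. But the examples you point to (those behind Remark~\ref{rem:linking is better}) violate this: for $L10a129$ the signature bound at $\omega=e^{\pi i/4}$ equals $10/4$, i.e.\ the two defects sum to $4>2=\Lambda$, so the bare linking bound $2n$ \emph{loses} to the signature bound $\tfrac{10}{4}n$ on every connected sum. What saves the paper is that its linking bound is the enhanced one of Lemma~\ref{lem:pre_linking_bound}, $\sum_{i<j}|\ell k(K_i,K_j)|+n_{oc}(L)$, and it is the count of disjoint obstructive (Whitehead) sublinks---which grows like $n$ when the connected sums are performed along the third component---that pushes the bound to $3n$. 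You either need to incorporate $n_{oc}$ into your $\Lambda$ or produce a genuinely different family in which the Cimasoni--Florens defects stay below the linking number; you flag this as the main obstacle, but it (together with the explicit verification of all three base links) is precisely where the content of the proof lies, so as written the argument is incomplete.
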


We now return to weak splitting numbers of links with 9 or fewer crossings.
Using the signature bound from Theorem~\ref{thm:SignatureIntro}, we are able to determine 6 of the missing values in Table~\ref{tab:small_linksI}.
The remainder of this article develops methods to investigate the $10$ remaining~cases.

Inspired by \cite[Theorem 7.7]{BorodzikGorsky}, we first develop new obstructions based on the $J$-function from link Floer homology~\cite{GorskyNemethi,BorodzikGorsky}.
The definition and properties of the {$J$-function} are reviewed in Section~\ref{sub:HF}.~For the moment we only note that the $J$-function
\[ J_L \colon  \bZ^{\ell} \longrightarrow \Z_{\geq 0} \]
is an invariant of the $\ell$-component oriented link $L$. 
To state our result we also need the integer-valued knot concordance invariant $\nu^+$ introduced by Hom and Z.~Wu~\cite{HomWu}. 
In fact,~$\nu^+(K)$ can be defined as the minimal $m\in\Z_{\geq 0}$ such that $J_K(m)=0$; see~Section~\ref{sub:HF}~as well as~\cite[Definition 2.12 and Proposition 2.13]{OzsvathStipsiczSzabo}.

Finally, given a splitting sequence for an oriented link $L = K_1 \cup \ldots\cup K_\ell$, we use $s_i$ (resp. $m_{i,j}^+$) to denote the number of self crossing changes performed on $K_i$ (resp. the number of positive mixed crossing changes involving both $K_i$ and $K_j$).

\begin{thm}
\label{thm:newobstructionHFL}
Let $L=K_1\cup \ldots \cup K_\ell $ be an oriented link, and let $\lbrace \varepsilon_{i,j} \rbrace_{i \neq j} \subset \lbrace 0,1\rbrace^{\ell^2-\ell}$ be a sequence of $\ell^2-\ell$ integers with $\varepsilon_{i,j}+\varepsilon_{j,i}=1$.
If $J_L(v_1,\ldots,v_\ell) \neq 0$ then, for some~$i$, 
\begin{equation}
\label{eq:Inequalityvi}
v_i <  \nu^{+}(K_i) + s_i + \sum_{j \neq i} \varepsilon_{i,j} m^+_{i,j}.
\end{equation}
\end{thm}


This new obstruction still does not allow us to determine the missing values of Table~\ref{tab:small_linksI}. 
Nevertheless, in Example \ref{ex:Jfunct} we describe an infinite family of links for which the $J$-function determines $\wsp$, whereas the linking and signature bounds are ineffective.

To conclude the computation of $\wsp$ 
for 
the links in Table \ref{tab:small_linksI}, Section~\ref{sec:coveringcalculus} uses homotopical considerations as well as covering link calculus.
Here, recall that for an $\ell$-component link~$L=K_1 \cup \ldots \cup K_\ell$ with $K_i$ unknotted, one can form the $2$-fold cover $p \colon \mathbb{S}^3 \to \mathbb{S}^3$ branched along $K_i$.
The link $\widetilde{L}=p^{-1}(L \setminus K_i)\subset \bS^3$ is called the \textit{covering link} of $L$.

\begin{prop}
\label{prop:HomotopyIntro}
Let $L=K_1 \cup \ldots \cup K_\ell$ be an $\ell$-component link.
\begin{enumerate}
\item If~$L$ can be split via~$k$ crossing changes that do not involve an unknotted component~$K_i$, then the corresponding covering link satisfies~$ \wsp(\widetilde{L}) \leq 2k$.
\item If $L$ can be split via self crossing changes that do not involve $K_1$, then $L\setminus K_1$ is null-homotopic in the exterior of $K_1$. 
\item
If $L$ has pairwise vanishing linking numbers and is not null-homotopic, then either $\wsp(L)=\operatorname{sp}(L)$ or $3 \leq \wsp(L) \leq \operatorname{sp}(L)-1$. 
\end{enumerate}
\end{prop}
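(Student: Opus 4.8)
The three parts are essentially independent, so I would handle them separately. For part (1) I would exploit that the branched cover $p$ restricts to an honest double cover over $\mathbb{S}^3 \setminus K_i$. A crossing change not involving $K_i$ is supported in a ball $B$ disjoint from the branch locus $K_i$; since $B$ is simply connected, $p^{-1}(B)$ is a disjoint union of two balls, each mapped homeomorphically onto $B$, so the crossing change lifts to exactly two crossing changes upstairs. Thus $k$ crossing changes avoiding $K_i$ lift to $2k$ crossing changes of $\widetilde L$. It then remains to check that these actually split $\widetilde L$: after the crossing changes $L \setminus K_i$ lies in a ball $B'$ disjoint from $K_i$, and $p^{-1}(B')$ is a pair of disjoint balls, each containing a completely split copy of $L \setminus K_i$. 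Hence $\widetilde L$ is completely split and $\wsp(\widetilde L) \le 2k$.

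For part (2) the point is that a self crossing change on a component $K_j$ is realised by a homotopy of $K_j$ supported in a ball, and if that ball avoids $K_1$ the homotopy takes place in the exterior $\mathbb{S}^3 \setminus K_1$ and hence preserves the free homotopy class of $K_j$ in $\pi_1(\mathbb{S}^3 \setminus K_1)$. Since the whole splitting sequence consists of self crossing changes avoiding $K_1$, each $K_j$ with $j \ne 1$ has the same class before and after splitting. After splitting each $K_j$ sits in a ball disjoint from $K_1$ and is therefore null-homotopic in the exterior; running the homotopies backwards, $K_j$ was already null-homotopic there, which is the claim.

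Part (3) is the delicate one and is where I expect the main difficulty. The bound $\wsp(L) \le \operatorname{sp}(L)$ is automatic, because a splitting sequence of mixed crossing changes is in particular a weak splitting sequence; so it suffices to prove that $\wsp(L) < \operatorname{sp}(L)$ forces $\wsp(L) \ge 3$, equivalently that $\wsp(L) \le 2$ implies $\wsp(L) = \operatorname{sp}(L)$. I would fix an optimal weak splitting sequence and let $s$ and $m$ be its numbers of self and mixed crossing changes. Two constraints control the situation. First, since all linking numbers of $L$ vanish and those of a completely split link vanish too, the signed contribution of the mixed crossing changes to each pairwise linking number is zero; in particular the total number $m$ of mixed crossing changes is even. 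Second, self crossing changes are link homotopies, so a sequence with $m=0$ would exhibit $L$ as link-homotopic to a completely split, hence null-homotopic, link, contradicting the hypothesis that $L$ is not null-homotopic.

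Putting these together for $\wsp(L) \le 2$: the case $\wsp(L)=0$ makes $L$ split and null-homotopic and is excluded, and when $\wsp(L) \in \{1,2\}$ the parity of $m$ leaves only $m \in \{0,2\}$, with $m=0$ ruled out by the homotopy constraint. Hence $\wsp(L)=2$ with $m=2$ and $s=0$; since no self crossing changes occur, the two mixed crossing changes split $L$ without altering the component knot types, so $\operatorname{sp}(L) \le 2 = \wsp(L)$ and therefore $\operatorname{sp}(L)=\wsp(L)$. For $\wsp(L) \ge 3$ the stated dichotomy is immediate from $\wsp(L) \le \operatorname{sp}(L)$. The step I would be most careful about is precisely this interplay of the parity and homotopy constraints: one must verify that an optimal weak splitting sequence realising $\wsp(L) \le 2$ cannot combine a single mixed crossing change with a self crossing change, which is exactly what the vanishing of the linking numbers rules out.
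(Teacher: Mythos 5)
Your proposal is correct and follows essentially the same route as the paper: the lifting-of-balls argument for the covering link (which the paper delegates to Cha--Friedl--Powell), the observation that self crossing changes preserve (link-)homotopy classes in the exterior of $K_1$, and for part (3) the combination of the parity constraint $m \equiv \sum_{i<j}\ell k(K_i,K_j) \equiv 0 \pmod 2$ with the fact that $m=0$ would force $L$ to be null-homotopic. The paper organises part (3) by cases on whether a minimal sequence with $s=0$ exists rather than on the value of $\wsp(L)\le 2$, but the content is identical.
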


Combining Proposition~\ref{prop:HomotopyIntro} with the previously described methods, we are able to determine the weak splitting numbers of all but two of the prime links with $9$ or fewer crossings.
All our lower bounds and obstructions failed to determine the weak splitting number for the links $L9a29$ and $L9a30$ in Thistlethwaite's link table.
 
\subsection*{Organisation}
In Section~\ref{sec:LowerBounds}, we establish Theorem~\ref{prop:main_result1} and Theorem~\ref{thm:SignatureIntro}.
In Section~\ref{sub:HF}, we review the J-function and prove (a generalisation of) Theorem~\ref{thm:newobstructionHFL}.
In Section~\ref{sec:Homotopy}, we prove the homotopical obstructions of Proposition~\ref{prop:HomotopyIntro}, while Section~\ref{sec:Table} lists the weak splitting numbers of all but two of the 130 links with 9 or fewer crossings.

\section{Lower bounds}\label{sec:general_lower_bounds}
\label{sec:LowerBounds}

\subsection{Linking numbers}

This subsection shows that the linking numbers as well as the number of ``obstructive sublinks'' provide a lower bound on the weak splitting number.
\medbreak

Given a link $L$, a non-split sublink $J\subseteq L$ is called \dfn{obstructive} if it has vanishing linking matrix. 
A collection of sublinks $J_1,\ldots, J_k\subseteq L$ is called an \dfn{obstructive collection} if each $J_i$ is obstructive, and if the $J_i$'s are pairwise disjoint (i.e. they do not have common components).
We use $n_{oc}$ to denote the maximal number of elements among all obstructive collections of sublinks of $L$.

The next result shows that linking numbers and the number of obstructive links provide lower bounds on the weak splitting number. The proof is identical to that of~\cite[Lemma~$2.1$]{ChaFriedlPowell}, with a small \emph{caveat}: there exists links with an arbitrary number of components, which have pairwise linking number $0$, and $\wsp$ equal to $1$.

\begin{lemma}
 \label{lem:pre_linking_bound}
If a splitting sequence for an $\ell$-component link $L=K_1 \cup \ldots \cup K_\ell$ has~$s$ self crossing changes and $m$ mixed crossing changes, then
\[\sum_{i<j} \vert \ell k(K_i,K_j) \vert  \leq m\qquad\text{and}\qquad m \equiv \sum_{i<j}  \ell k(K_i,K_j) \: \mod\,2.\]
Additionally, the linking numbers give a lower bound on the weak splitting number:
\begin{equation}
 \sum_{i<j} \vert \ell k(K_i,K_j) \vert +n_{oc}(L) \leq\wsp(L).
 \label{eq:linking_bound_simple}
 \end{equation}
\end{lemma}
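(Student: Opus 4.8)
The plan is to track how each type of crossing change affects the linking numbers, and then to isolate the crossing changes that are forced by the obstructive collection. First I would record the two elementary facts that drive everything: a self crossing change leaves every linking number unchanged, while a mixed crossing change between $K_i$ and $K_j$ alters $\lk(K_i,K_j)$ by exactly $\pm 1$ and fixes all the other linking numbers. Now fix a splitting sequence with $s$ self and $m$ mixed crossing changes, and let $m_{ij}$ be the number of mixed changes between $K_i$ and $K_j$, so that $m=\sum_{i<j}m_{ij}$. Following $\lk(K_i,K_j)$ along the sequence, it starts at its original value and ends at $0$, since the final link is completely split, changing by $\pm1$ at each of the $m_{ij}$ relevant steps. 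Hence $\vert\lk(K_i,K_j)\vert\le m_{ij}$ and $m_{ij}\equiv\lk(K_i,K_j)\pmod 2$; summing over pairs yields both assertions of the first display.

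For the lower bound \eqref{eq:linking_bound_simple}, I would fix an obstructive collection $J_1,\dots,J_k$ with $k=n_{oc}(L)$ realising the maximum, together with a splitting sequence realising $\wsp(L)=s+m$. The key point is that restricting this sequence to a fixed sublink $J_t$, that is, discarding every crossing change that involves a component outside $J_t$, produces a valid splitting sequence for $J_t$. Indeed, tracking the subcollection of $J_t$-components through the sequence, its isotopy type is unaffected by a mixed crossing change between a component of $J_t$ and a component not in $J_t$, because after deleting the outside component the two strands at that crossing reduce to a single strand of $J_t$. The only moves that change $J_t$ are therefore the self crossing changes supported on $J_t$ and the mixed crossing changes between two components of $J_t$, and at the end of the sequence these carry $J_t$ to a completely split link. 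Since $J_t$ is non-split, at least one such internal crossing change is required.

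I would then split the mixed crossing changes into those occurring between two components lying in a common $J_t$, whose number I call $m^{\mathrm{in}}$, and all the rest, whose number I call $m^{\mathrm{out}}$, so that $m=m^{\mathrm{in}}+m^{\mathrm{out}}$. Because each $J_t$ has vanishing linking matrix, every pair contributing to $\sum_{i<j}\vert\lk(K_i,K_j)\vert$ lies outside all the $J_t$, whence $\sum_{i<j}\vert\lk(K_i,K_j)\vert\le m^{\mathrm{out}}$. On the other hand, the pairwise disjointness of the $J_t$ guarantees that each internal crossing change counted in the previous paragraph is internal to at most one sublink, so summing the ``at least one internal change per $J_t$'' over the $k$ sublinks gives $s+m^{\mathrm{in}}\ge k=n_{oc}(L)$. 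Combining the two estimates, $\wsp(L)=s+m^{\mathrm{in}}+m^{\mathrm{out}}\ge n_{oc}(L)+\sum_{i<j}\vert\lk(K_i,K_j)\vert$, as desired.

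The main obstacle is the restriction argument of the second paragraph: one must argue cleanly that crossing changes involving outside components genuinely leave the sublink untouched, and that disjointness prevents a single internal crossing change from being charged to two different sublinks. This disjointness is precisely what accommodates the \emph{caveat}: a link with all pairwise linking numbers zero can still have $\wsp=1$, and although it may contain many obstructive sublinks, these all share components, so every obstructive collection is a singleton and the bound remains consistent.
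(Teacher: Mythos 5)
Your argument is correct and follows the same route the paper intends: the paper gives no proof of its own but defers to \cite[Lemma~2.1]{ChaFriedlPowell}, whose content is exactly your first paragraph (tracking each $\lk(K_i,K_j)$ through the sequence, changing by $\pm 1$ only at mixed crossings between $K_i$ and $K_j$), and your restriction-to-sublinks counting for the $n_{oc}$ term is the standard way to supply the additional summand. The only point worth flagging is the degenerate reading of ``non-split'': your claim that each $J_t$ forces an internal crossing change implicitly uses that an obstructive sublink has at least two components, which is the intended interpretation.
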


The following example shows how Lemma~\ref{lem:pre_linking_bound} can be applied in practice.

\begin{exa}
The $3$-component link $L9a47$ has $ \wsp(L9a47)= 3$.
Indeed, the inspection of a diagram shows that $ \wsp(L9a47) \leq 3$.
The equality follows from Lemma~\ref{lem:pre_linking_bound}, since we have~$\sum_{i<j} \left| \ell k(K_i,K_j)\right| = 2$, and $L9a47$ contains the Whitehead link as a sublink.
\end{exa}

We observe that linking numbers provide a condition for the equality $\ssp(L)=\wsp(L)$.

\begin{rem}
We argue that if $N:=\sum_{i<j} \ell k(K_i,K_j)=\wsp(L)$, then  $\wsp(L)=\ssp(L)$. 
We need only show that $\ssp(L) \leq \wsp(L)$.
Choose a minimal splitting sequence with~$s$ self crossing changes and $m$ mixed crossing changes. 
By Lemma~\ref{lem:pre_linking_bound} we have~$N\leq m \leq s+m=N$. It follows that $s=0$ and $\ssp(L) \leq m=\wsp(L)$.
\end{rem}

\subsection{General Bounds}
We prove Theorem~\ref{prop:main_result1}:
any $\R$-valued oriented link invariant that has a bounded behaviour with respect to crossing changes (recall~\eqref{eq:self} in the statement of  Theorem \ref{prop:main_result1}) 
provides
a lower bound on the weak splitting number.

\begin{proof}[Proof of Theorem \ref{prop:main_result1}]
Fix a minimal splitting sequence~$L=L^{(0)},L^{(1)},\ldots,L^{(k)}$.~%
Denote by $s$ the number of self crossing changes in said splitting sequence, and denote by $m$ the number of mixed crossing changes.
In particular, we have~$ \wsp(L) = k = s + m$.~%
Given an oriented link $J=J_1 \cup \ldots \cup J_\ell$, we consider the difference
 \[ i(J) := I(J) - I\left(\bigsqcup J_{i}\right),\]
and we wish to study the behaviour of $i(L^{(j)}) - i(L^{(j-1)})$. First, when $L^{(j)}$ is obtained from~$L^{(j-1)}$ by a self crossing change, we can apply~\eqref{eq:self} to deduce that
\begin{equation}
\label{eq:SelfFirst}
\begin{cases}
-(b-a) \leq I(L^{(j)}) - I(L^{(j-1)}) - b \leq 0 & \text{if the crossing is positive}, \\
 0 \leq I(L^{(j)}) - I(L^{(j-1)}) + b \leq (b-a) &\text{if the crossing is negative.} 
\end{cases}
\end{equation}
For each $r$, use $K^{(r)}_{1},\, \ldots,\, K^{(r)}_{\ell}$ to denote the components of $L^{(r)}$.~%
Consider the links~$\sqcup_r K^{(j)}_{r}$ and~$\sqcup_r K^{(j-1)}_{r}$ obtained as the split unions of the components of $L^{(j)}$ and $L^{(j-1)}$, respectively.
These links differ by a self crossing change, which is of the same type as the crossing change performed to pass from $L^{(j-1)}$ to $L^{(j)}$.
Thus, a second application of~\eqref{eq:self} gives the following inequalities:
\begin{equation}
\label{eq:SelfSecond}
\begin{small}
\begin{cases}
0 \leq -I\left(\bigsqcup_{r} K^{(j)}_{r}\right) + I\left(\bigsqcup_{r} K^{(j-1)}_{r}\right) + b \leq (b-a) & \text{if the crossing is positive}, \\
-(b-a) \leq -I\left(\bigsqcup_{r} K^{(j)}_{r}\right) + I\left(\bigsqcup_{r} K^{(j-1)}_{r}\right) -b \leq 0 & \text{if the crossing is negative}.
\end{cases}
\end{small}
\end{equation}
Adding the inequalities in~\eqref{eq:SelfFirst} to those in~\eqref{eq:SelfSecond}, we obtain (regardless of the type of the crossing) the inequality
\begin{equation}
\label{eq:inter}
-(b-a) \leq i(L^{(j)}) - i(L^{(j-1)}) \leq (b-a).
\end{equation}
Now, assume that the crossing change between $L^{(j-1)}$ and $L^{(j)}$ involves two different components.
A similar reasoning to the one above yields the following inequality:
\begin{equation}
\label{eq:extra}
-b^\prime \leq i(L^{(j)}) - i(L^{(j-1)}) \leq b^\prime.
\end{equation}
Recall that $m$ (resp. $s$) denotes the number of mixed (resp. self) crossing changes in our fixed splitting sequence for $L$.
We have that for $s$ indices $j_1, \ldots, j_s$ Equation \eqref{eq:extra} holds, while for the remaining $m$ indices Equation \eqref{eq:inter} holds. Adding all these equations, and taking into account that $i(L^{(s+m)}) = 0$, we get
$$\vert i(L) \vert \leq (b-a)s + b^\prime m \  =(b-a)(s+m)+(b^\prime-(b-a)) m.$$
Now, since $0 \leq (b-a)-b^\prime  \  = \delta$, Lemma~\ref{lem:pre_linking_bound} implies that
\[-\delta m \leq -\delta \sum_{i<j} \vert \ell k(K_i,K_j) \vert  \]%
and the result is an immediate consequence of the following computation:%
\[\vert i(L) \vert  \leq (b-a)(s+m)-\delta m \leq (b-a)\wsp(L)-\delta \sum_{i<j} \vert \ell k(K_i,K_j) \vert . \qedhere \]
\end{proof}

We can now recover~\cite[Theorem 1.4]{CavalloCollari} from Theorem \ref{prop:main_result1}.
In particular, it follows from~\cite[Examples 2.1, 2.2, and 2.3]{CavalloCollari} that the $s$, $\tau$ and $s_n$-invariants (i.e.~the $\mathfrak{sl}_n$-analogues of $s$ \cite{Lobb, Wu}) all give rise to lower bounds for $\wsp$. The reader is referred to \cite{CavalloCollari} for the definition and general properties of slice-torus link invariants.

\begin{cor}[Slice-torus bound]\label{cor:ACbound}
Let $\nu$ be a slice-torus link invariant. If $L$ is an $\ell$-component oriented link, then
\[\left\vert \nu(L) - \sum_{i=1}^\ell \nu(K_{i}) \right\vert \leq \wsp(L).\]
\end{cor}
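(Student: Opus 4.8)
The plan is to apply Theorem~\ref{prop:main_result1} to the invariant $I := \nu$, so that the entire content of the argument lies in reading off admissible constants $a, b, b'$ from the defining axioms of a slice-torus link invariant and in checking that $\delta$ vanishes. First I would recall from \cite{CavalloCollari} the behaviour of $\nu$ under a single crossing change. A slice-torus link invariant satisfies a crossing-change inequality asserting that a positive crossing change (a negative crossing turned into a positive one) can only increase $\nu$, and by at most one: concretely, if $L$ is obtained from $L'$ by a positive self crossing change then $\nu(L) - \nu(L') \in [0,1]$, whereas any mixed crossing change alters $\nu$ by at most one in absolute value, so $\nu(L) - \nu(L') \in [-1,1]$.

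These two inequalities are precisely the hypothesis~\eqref{eq:self} with $a = 0$, $b = 1$, and $b' = 1$. With these values one computes $\delta = (b-a) - b' = 1 - 1 = 0$, which is nonnegative, so Theorem~\ref{prop:main_result1} applies. Since $\delta = 0$, the linking-number term in~\eqref{eq:SkeinBound} disappears, and since $b - a = 1$ the right-hand side reduces to $\wsp(L)$. Thus~\eqref{eq:SkeinBound} specialises to
\[
\left\vert \nu(L) - \nu\left(\bigsqcup_{i=1}^\ell K_i\right)\right\vert \leq \wsp(L).
\]

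To finish I would invoke the additivity of $\nu$ under split unions, another of the defining properties of a slice-torus link invariant, which yields $\nu\left(\bigsqcup_{i=1}^\ell K_i\right) = \sum_{i=1}^\ell \nu(K_i)$ and hence the claimed bound. The only genuine subtlety, and the step I expect to require the most care, is confirming that the crossing-change axioms produce exactly the constants $a = 0$, $b = 1$, $b' = 1$, with the correct sign conventions for positive versus negative crossings and for self versus mixed changes; the worked cases of the $s$, $\tau$, and $s_n$-invariants in \cite{CavalloCollari} serve as a check that these constants are the right ones. Once the axioms are matched to~\eqref{eq:self} in this way, the conclusion is an immediate consequence of Theorem~\ref{prop:main_result1}.
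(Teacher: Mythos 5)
Your proposal is correct and follows exactly the paper's own argument: apply Theorem~\ref{prop:main_result1} with $a=0$, $b=b'=1$ (so $\delta=0$ and the linking term vanishes), citing the crossing-change behaviour of slice-torus invariants from \cite{CavalloCollari}, and then conclude by additivity under split unions. No gaps; the constants and sign conventions you identify are the ones the paper uses.
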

\begin{proof}
Slice-torus link invariants are known to satisfy the hypotheses of Theorem \ref{prop:main_result1} with $a = 0$ and~$b = b^\prime = 1$; see~\cite[Proposition 2.9]{CavalloCollari}.
Since slice-torus link invariants are, by definition, additive under disjoint unions, the corollary follows.
\end{proof}

Theorem~\ref{prop:main_result1} can be used to obtain a lower bound for~$\wsp$~from (finite) families of invariants which are uniformly bounded with respect to crossing changes, in the sense of~\eqref{eq:FamilyBound}.
We note that the bound obtained in the following proposition is stronger than the bound obtained by applying na\"{i}vely Theorem~\ref{prop:main_result1}
to the sum of the invariants.

\begin{prop}\label{prop:uniformly_bounded}
Let $\{ I_{1},\,\ldots,\,I_{k} \}$ be a family of $\R$-valued oriented link invariants.
Suppose there exists $\Delta,\, \beta \in~\mathbb{R}$ such that
\begin{equation}
\label{eq:FamilyBound} 
\sum_{j=1}^k \left\vert I_{j}( L) - I_{j}(L^\prime) \right\vert\leq \Delta 
\quad \text{ or } \quad
\sum_{j=1}^k \left\vert I_{j}( L) - I_{j}(L^\prime) \right\vert\leq \beta
\end{equation}
depending on whether $L$ and $L'$ are related by a self crossing change or a mixed crossing change.
 If $\delta := 2 \Delta - \beta \geq 0$, then for each oriented link~$L = K_{1} \cup \ldots \cup K_{\ell}$, we have
\begin{equation}
\label{eq:SumOfInvariants}
\sum_{j=1}^{k} \left\vert I_{j}(L) - I_{j}\left(\bigsqcup_{i=1}^\ell K_{i}\right) \right\vert + \delta \sum_{i<j} \vert \ell k(K_{i},K_{j})\vert\leq 2 \Delta \wsp(L).
\end{equation}
\end{prop}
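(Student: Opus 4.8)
The plan is to mimic the proof of Theorem~\ref{prop:main_result1} essentially verbatim, but to work with the aggregate quantity $\sum_{j=1}^k \lvert i_j(L^{(r)})\rvert$ rather than a single invariant, where $i_j(J) := I_j(J) - I_j(\bigsqcup_i J_i)$. First I would fix a minimal splitting sequence $L = L^{(0)}, L^{(1)}, \ldots, L^{(k)}$ with $s$ self crossing changes and $m$ mixed crossing changes, so that $\wsp(L) = s+m$. The goal is to control how $\sum_j \lvert i_j(L^{(r)})\rvert$ changes at each step of the sequence, using the triangle inequality together with the family bound~\eqref{eq:FamilyBound}.

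The key estimate is to bound the one-step difference $\bigl\lvert \sum_j \lvert i_j(L^{(r)})\rvert - \sum_j \lvert i_j(L^{(r-1)})\rvert \bigr\rvert$. By the reverse triangle inequality this is at most $\sum_j \lvert i_j(L^{(r)}) - i_j(L^{(r-1)})\rvert$. For each fixed $j$, the quantity $i_j(L^{(r)}) - i_j(L^{(r-1)})$ decomposes, exactly as in the proof of Theorem~\ref{prop:main_result1}, into the change of $I_j$ across the crossing change on the link and the change of $I_j$ across the \emph{same type} of crossing change on the split union of components; hence $\lvert i_j(L^{(r)}) - i_j(L^{(r-1)})\rvert \leq \lvert I_j(L^{(r)}) - I_j(L^{(r-1)})\rvert + \lvert I_j(\bigsqcup K^{(r)}) - I_j(\bigsqcup K^{(r-1)})\rvert$. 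Summing over $j$ and applying~\eqref{eq:FamilyBound} to \emph{each} of the two sums separately, I get a one-step bound of $2\Delta$ at a self crossing change and $2\beta$ at a mixed crossing change. The one subtlety here is that the split-union crossing change is of the same type (self or mixed) as the original, so the correct branch of~\eqref{eq:FamilyBound} applies to both terms; this is where the doubling of $\Delta$ (as opposed to using $\Delta$ once) comes from, and it is also the reason the proposition is genuinely stronger than applying Theorem~\ref{prop:main_result1} to $\sum_j I_j$.

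Telescoping over the whole sequence and using $i_j(L^{(k)}) = 0$ for every $j$ (since $L^{(k)}$ is already split), I obtain
\[
\sum_{j=1}^k \bigl\lvert i_j(L)\bigr\rvert \;\leq\; 2\Delta\, s + 2\beta\, m \;=\; 2\Delta(s+m) + (2\beta - 2\Delta)\,m.
\]
Now I would rewrite the correction term using $\delta = 2\Delta - \beta$, so that $2\beta - 2\Delta = -\delta - \beta \leq -\delta$ whenever $\beta \geq 0$; more directly, since $\delta \geq 0$ I have $2\beta = 4\Delta - 2\delta$, and I would regroup to extract a clean multiple of $m$. Finally, invoking Lemma~\ref{lem:pre_linking_bound} to replace $m$ by the lower bound $\sum_{i<j}\lvert \ell k(K_i,K_j)\rvert$ (legitimate precisely because the coefficient of $m$ is $-\delta \leq 0$) yields
\[
\sum_{j=1}^k \bigl\lvert i_j(L)\bigr\rvert \;\leq\; 2\Delta\,\wsp(L) - \delta \sum_{i<j}\lvert \ell k(K_i,K_j)\rvert,
\]
which rearranges to~\eqref{eq:SumOfInvariants}.

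The main obstacle, such as it is, lies in bookkeeping the two applications of~\eqref{eq:FamilyBound} at each step and confirming that the split-union crossing change really is of the same type as the original crossing change; once that is granted, the argument is a routine triangle-inequality telescoping. The only arithmetic care needed is to track the sign of the $m$-coefficient so that the substitution from Lemma~\ref{lem:pre_linking_bound} points in the favourable direction, which is guaranteed exactly by the hypothesis $\delta \geq 0$.
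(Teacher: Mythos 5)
Your overall strategy --- telescoping the aggregate quantity $\sum_j \lvert i_j(L^{(r)})\rvert$ directly, rather than reducing to Theorem~\ref{prop:main_result1} as the paper does (the paper applies that theorem to the signed sums $\sum_j \epsilon_j I_j$ over all $\underline{\epsilon}\in\{\pm1\}^k$ and then chooses the signs per link to move the absolute values inside) --- is viable, but your estimate at mixed crossing changes is wrong, and the error is not merely a loss of sharpness: it breaks the final algebra. When $L^{(r)}$ is obtained from $L^{(r-1)}$ by a \emph{mixed} crossing change, the isotopy type of each individual component is unchanged, so $\bigsqcup_r K^{(r)}_r = \bigsqcup_r K^{(r-1)}_r$ and the second term $\sum_j\lvert I_j(\bigsqcup K^{(r)}) - I_j(\bigsqcup K^{(r-1)})\rvert$ vanishes; there is no ``crossing change of the same type'' on the split union to which \eqref{eq:FamilyBound} should be applied. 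The correct one-step bound at a mixed step is therefore $\beta$, not $2\beta$. (At a self crossing change your bound of $2\Delta$ is correct, since there the split union genuinely does change by a self crossing change.)

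With $2\beta$ in place of $\beta$ the conclusion does not follow. You would get $\sum_j\lvert i_j(L)\rvert \le 2\Delta s + 2\beta m = 2\Delta\wsp(L) - (2\Delta - 2\beta)m$, and $2\Delta - 2\beta = \delta - \beta$ is strictly smaller than $\delta$ whenever $\beta>0$ (and can even be negative, in which case you cannot substitute the linking-number lower bound for $m$ at all). Your attempted repair is also incorrect as algebra: $2\beta - 2\Delta = -\delta + \beta$, not $-\delta-\beta$, so the inequality $2\beta - 2\Delta\le -\delta$ would require $\beta\le 0$, which fails in any nontrivial application. Once you replace $2\beta$ by $\beta$, everything goes through: $2\Delta s + \beta m = 2\Delta\wsp(L) - \delta m \le 2\Delta\wsp(L) - \delta\sum_{i<j}\lvert\lk(K_i,K_j)\rvert$ by Lemma~\ref{lem:pre_linking_bound} together with $\delta\ge 0$, which rearranges to \eqref{eq:SumOfInvariants}.
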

\begin{proof}
Fix $\underline{\epsilon} = (\epsilon_1 , \ldots ,\epsilon_{k}) \in \{ \pm 1 \}^{k}$, and consider the sum~$ I(\underline{\epsilon}) = \sum_{j=1}^{k} \epsilon_{j}I_{j}$.~%
Using~\eqref{eq:FamilyBound},
a quick verification shows that $I(\underline{\epsilon})$ satisfies the hypothesis of Theorem~\ref{prop:main_result1} with~$-a = b = \Delta$ and $b^\prime = \beta$, for each choice of~$\underline{\epsilon}$. 
Applying Theorem \ref{prop:main_result1}, we deduce that the following inequality holds for every $\underline{\epsilon}$ and every $L$:
\begin{equation}
\label{eq:Intermediate}
 \left\vert I(\underline{\epsilon})(L) - I(\underline{\epsilon})\left(\bigsqcup_{i=1}^\ell K_{i}\right) \right\vert + \delta \sum_{i<j} \vert \ell k(K_{i},K_{j})\vert\leq 2 \Delta \wsp(L).
 \end{equation}
To conclude, it remains to arrange the position of the absolute values; compare~\eqref{eq:Intermediate} with~\eqref{eq:SumOfInvariants}.
To achieve this, fix an arbitrary link $L= K_{1} \cup \ldots \cup K_{\ell}$, and choose any sequence $\underline{\epsilon}$ of signs so that
\[\epsilon_{j}\left( I_{j}(L) - I_{j}\left(\bigsqcup_{i=1}^\ell K_{i}\right) \right) = \left\vert I_{j}(L) - I_{j}\left(\bigsqcup_{i=1}^\ell K_{i}\right) \right\vert,\quad \text{ for all }  j\in\{ 1,\ldots,k\}.\]
Since such a choice can be performed for each $L$, the proof of the proposition is concluded.
\end{proof}

As an application of Proposition~\ref{prop:uniformly_bounded} we (re-)obtain the lower bounds on $\wsp$ that appeared in the third author's (unpublished) PhD~thesis~\cite[Proposition 4.4.5]{ConwayThesis}.

We briefly recall the definition of the multivariable signature and nullity, referring to~\cite{CimasoniFlorens} for details. 
A \emph{C-complex} for an ordered link~$L=K_1 \cup \ldots \cup K_\ell$ consists of a
collection $F$ of Seifert surfaces~$F_1, \ldots , F_\ell$ for the components~$K_1, \ldots , K_\ell$ that intersect only along clasps.
Given a C-complex and a
sequence~$\varepsilon=(\varepsilon_1,\ldots, \varepsilon_\ell)$ of $\pm 1$'s, there are $2^\ell$ \emph{generalized Seifert matrices}~$A^\varepsilon$, which extend the usual Seifert matrix.
Note that for all~$\varepsilon$, we have~$A^{-\varepsilon} = (A^\varepsilon)^T$.
Using this fact, one can check that for any~$\omega = (\omega_1,\dots,\omega_\ell)\in(\mathbb{S}^1)^\ell$, the following matrix is Hermitian:
$$H(\omega)=\sum_\varepsilon\prod_{i=1}^\ell(1-\overline{\omega}_i^{\varepsilon_i})\,A^\varepsilon.$$
Since $H(\omega)$ vanishes 
as soon as one of the coordinates of~$\omega$ is equal to~$1$, it is convenient to restrict our attention to~$\omega \in \mathbb{T}^\ell_*:= (\mathbb{S}^1 \setminus \lbrace 1 \rbrace)^\ell$.
We use $\beta_0(F)$ to denote the number of connected components of a C-complex $F$.

\begin{defn}
\label{def:Signature}
The \emph{multivariable signature and nullity} of an~ordered link~$L$ at $\omega \in \mathbb{T}^\ell_*$ are 
\begin{align*}
\sigma_L(\omega):=\operatorname{sign} H(\omega); \quad \quad 
\eta_L(\omega):=  \operatorname{null}H(\omega) + \beta_0(F)-1.
\end{align*}
\end{defn}

The multivariable signature and nullity are known not to depend on the choice of the C-complex~\cite[Theorem 2.1]{CimasoniFlorens}.
Note that the signature is \emph{not} a slice-torus invariant: even though it satisfies the first three axioms of~\cite[Definition 2]{CavalloCollari}, it fails to satisfy the fourth.
Nonetheless, we can use Proposition~\ref{prop:uniformly_bounded} to sidestep this issue and  to establish that $\sigma_L$ and~$\eta_L$ provide lower bounds on the weak splitting number.

\begin{proof}[Proof of Theorem \ref{thm:SignatureIntro}]
The invariants $I_{1}(L) = \sigma_{L}(\omega)$ and $I_{2}(L)= \eta_{L}(\omega)$ satisfy the hypotheses of Proposition~\ref{prop:uniformly_bounded} with $\Delta = 2$ and $b^\prime = 1$; see~\cite[proof of Theorem~3.1]{CimasoniConwayZacharova},~\cite[Lemma~6.2]{BenardConway}, as well as~\cite[Section~5]{CimasoniFlorens}.
Furthermore, the multivariable signature and nullity behave as follows under disjoint unions (cf.~\cite[Proposition~2.13]{CimasoniFlorens}):
\[\sigma_{\bigsqcup_{i=1}^\ell K_{i}} (\omega) = \sum_{i} \sigma_{K_{i}}(\omega_{i}),\quad \eta_{\bigsqcup_{i=1}^\ell K_{i}} (\omega) =\sum_{i} \eta_{K_{i}}(\omega_{i}) + \ell - 1.\]
By Proposition \ref{prop:uniformly_bounded}, the announced inequality is established.
\end{proof}

\subsection{Comparing the slice-torus and the signature bounds}\label{sec:comparison}

In this subsection we compare the slice-torus bound and the signature bounds, and prove their independence. 

\begin{figure}[htbp]
\centering
\hspace{-.25\textwidth}
\begin{subfigure}[htbp]{0.35\textwidth}
\centering
\begin{tikzpicture}[thick,scale = .3, rotate = 90]
\draw[red] (-.5,9) .. controls +(0,-.25) and +(0,.5) .. (.5,8);
\pgfsetlinewidth{10*\pgflinewidth}
\draw[white] (.5,9) .. controls +(0,-.25) and +(0,.5) .. (-.5,8);
\pgfsetlinewidth{.1*\pgflinewidth}
\draw (.5,9) .. controls  +(0,-.25) and +(0,.5) .. (-.5,8);

\draw (-.5,8) .. controls +(0,-.5) and  +(0,.25).. (.5,7);
\pgfsetlinewidth{10*\pgflinewidth}
\draw[white](.5,8) .. controls  +(0,-.5) and  +(0,.25)  .. (-.5,7);
\pgfsetlinewidth{.1*\pgflinewidth}
\draw[red] (.5,8) .. controls  +(0,-.5) and  +(0,.25)  .. (-.5,7);

\node at (0,6) {$t$};
\draw (-.75,5) rectangle (.75,7);
\draw[->] (.5,5) -- (.5,2);

\draw[red] (-.5,5)  .. controls  +(0,-1) and  +(0,1)  .. (-4,2);
\pgfsetlinewidth{10*\pgflinewidth}
\draw[white] (-.5,2)  .. controls  +(0,1) and  +(0,-1)  .. (-4,5);
\pgfsetlinewidth{.1*\pgflinewidth}
\draw[red,->] (-.5,2)  .. controls  +(0,1) and  +(0,-1)  .. (-4,5);

\draw[red] (-.5,0) .. controls +(0,-.25) and +(0,.5) .. (.5,-1);
\pgfsetlinewidth{10*\pgflinewidth}
\draw[white] (.5,0) .. controls +(0,-.25) and +(0,.5) .. (-.5,-1);
\pgfsetlinewidth{.1*\pgflinewidth}
\draw (.5,0) .. controls  +(0,-.25) and +(0,.5) .. (-.5,-1);

\draw (.5,-2) .. controls  +(0,.25) and +(0,-.5) .. (-.5,-1);
\pgfsetlinewidth{10*\pgflinewidth}
\draw[white]  (-.5,-2) .. controls +(0,.25) and +(0,-.5) .. (.5,-1);
\pgfsetlinewidth{.1*\pgflinewidth}
\draw[red] (-.5,-2) .. controls +(0,.25) and +(0,-.5) .. (.5,-1);

\draw[red]  (-.5,-2)   .. controls +(0,-2) and +(0,-2) ..  (-4,-2) ;
\draw[red]  (-.5,9)   .. controls +(0,2) and +(0,2) ..  (-4,9) ;

\draw  (.5,-2)   .. controls +(0,-2) and +(0,-2) ..  (4,-2) ;
\draw   (.5,9)   .. controls +(0,2) and +(0,2) ..  (4,9) ;
\draw (4,9) -- (4,-2);

\draw[red] (-4,9) -- (-4,5);
\draw[red] (-4,-2) -- (-4,2);

\draw (-.75,0) rectangle (.75,2);
\node at (0,1) {$t$};
\end{tikzpicture}
\end{subfigure}
\hspace{.05\textwidth}
\begin{subfigure}[htbp]{0.35\textwidth}
\centering
\begin{tikzpicture}[thick, scale = .3]
\begin{scope}[shift = {(0,5.5)}, rotate = 90]
\draw[orange,fill] (-.5,5)  .. controls  +(0,-1) and  +(0,1)  .. (-4,2) -- (-4,-2)    .. controls +(0,-2) and +(0,-2) .. (-.5,-2) -- (-.5,2) .. controls  +(0,1) and  +(0,-1)  .. (-4,5) -- (-4,9) .. controls +(0,2) and +(0,2) ..  (-.5,9);

\draw[gray,fill]  (.5,-2)   .. controls +(0,-2) and +(0,-2) ..  (4,-2) --  (4,9)   .. controls +(0,2) and +(0,2) ..  (.5,9)  ;

\draw[orange, fill] (-.5,9) .. controls +(0,-.25) and +(0,.5) .. (.5,8) -- (-.5,8);
\draw[red] (-.5,9) .. controls +(0,-.25) and +(0,.5) .. (.5,8);
\pgfsetlinewidth{2*\pgflinewidth}
\draw[white] (.5,9) .. controls +(0,-.25) and +(0,.5) .. (-.5,8);
\pgfsetlinewidth{.5*\pgflinewidth}
\draw[fill,gray] (.5,9) .. controls  +(0,-.25) and +(0,.5) .. (-.5,8) -- (.5,8);
\draw (.5,9) .. controls  +(0,-.25) and +(0,.5) .. (-.5,8);

\draw[fill,gray](.5,8)-- (-.5,8) .. controls +(0,-.5) and  +(0,.25).. (.5,7);
\draw (-.5,8) .. controls +(0,-.5) and  +(0,.25).. (.5,7);
\pgfsetlinewidth{2*\pgflinewidth}
\draw[white](.5,8) .. controls  +(0,-.5) and  +(0,.25)  .. (-.5,7);
\pgfsetlinewidth{.5*\pgflinewidth}
\draw[orange,fill] (-.5,8)-- (.5,8) .. controls  +(0,-.5) and  +(0,.25)  .. (-.5,7);
\draw[red]  (.5,8) .. controls  +(0,-.5) and  +(0,.25)  .. (-.5,7);

\draw (.5,5) -- (.5,2);

\draw[red] (-.5,5)  .. controls  +(0,-1) and  +(0,1)  .. (-4,2);
\pgfsetlinewidth{2*\pgflinewidth}
\draw[white] (-.5,2)  .. controls  +(0,1) and  +(0,-1)  .. (-4,5);
\pgfsetlinewidth{.5*\pgflinewidth}
\draw[red] (-.5,2)  .. controls  +(0,1) and  +(0,-1)  .. (-4,5);

\draw[thick, dashed] (-.5,8) -- (.5,8);

\draw[orange,fill] (-.5,0) .. controls +(0,-.25) and +(0,.5) .. (.5,-1) -- (-.5,-1)--cycle;
\draw[red] (-.5,0) .. controls +(0,-.25) and +(0,.5) .. (.5,-1);
\pgfsetlinewidth{2*\pgflinewidth}
\draw[white] (.5,0) .. controls +(0,-.25) and +(0,.5) .. (-.5,-1);
\pgfsetlinewidth{.5*\pgflinewidth}
\draw[gray,fill] (.5,0) .. controls  +(0,-.25) and +(0,.5) .. (-.5,-1) -- (.5,-1)--cycle;
\draw (.5,0) .. controls  +(0,-.25) and +(0,.5) .. (-.5,-1);

\draw[gray,fill]  (.5,-2) .. controls  +(0,.25) and +(0,-.5) .. (-.5,-1) -- (.5,-1)--cycle;
\draw (.5,-2) .. controls  +(0,.25) and +(0,-.5) .. (-.5,-1);
\pgfsetlinewidth{2*\pgflinewidth}
\draw[white]  (-.5,-2) .. controls +(0,.25) and +(0,-.5) .. (.5,-1);
\pgfsetlinewidth{.5*\pgflinewidth}

\draw[orange,fill] (-.5,-2) .. controls +(0,.25) and +(0,-.5) .. (.5,-1)-- (-.5,-1)--cycle;
\draw[red] (-.5,-2) .. controls +(0,.25) and +(0,-.5) .. (.5,-1);

\draw[thick, dashed] (-.5,-1) -- (.5,-1);

\draw[red]  (-.5,-2)   .. controls +(0,-2) and +(0,-2) ..  (-4,-2) ;
\draw[red]  (-.5,9)   .. controls +(0,2) and +(0,2) ..  (-4,9) ;

\draw  (.5,-2)   .. controls +(0,-2) and +(0,-2) ..  (4,-2) ;
\draw   (.5,9)   .. controls +(0,2) and +(0,2) ..  (4,9) ;
\draw (4,9) -- (4,-2);

\draw[red] (-4,9) -- (-4,5);
\draw[red] (-4,-2) -- (-4,2);

\draw[fill,white] (-.75,5) rectangle (.75,7);
\node at (0,6) {$t$};
\draw[] (-.75,5) rectangle (.75,7);
\draw[fill,white] (-.75,0) rectangle (.75,2);
\node at (0,1) {$t$};
\draw (-.75,0) rectangle (.75,2);
\end{scope}

\begin{scope}[shift={+(7.7,-2)}]

\draw[gray,fill] (.5,9) rectangle (1.75,7);
\draw[orange,fill] (-.5,9) rectangle (-1.75,7);
\draw[orange, fill] (-.5,9) .. controls +(0,-.25) and +(0,.5) .. (.5,8) -- (-.5,8);
\draw[red] (-.5,9) .. controls +(0,-.25) and +(0,.5) .. (.5,8);
\pgfsetlinewidth{2*\pgflinewidth}
\draw[white] (.5,9) .. controls +(0,-.25) and +(0,.5) .. (-.5,8);
\pgfsetlinewidth{.5*\pgflinewidth}
\draw[fill,gray] (.5,9) .. controls  +(0,-.25) and +(0,.5) .. (-.5,8) -- (.5,8);
\draw (.5,9) .. controls  +(0,-.25) and +(0,.5) .. (-.5,8);

\draw[fill,gray](.5,8)-- (-.5,8) .. controls +(0,-.5) and  +(0,.25).. (.5,7);
\draw (-.5,8) .. controls +(0,-.5) and  +(0,.25).. (.5,7);
\pgfsetlinewidth{2*\pgflinewidth}
\draw[white](.5,8) .. controls  +(0,-.5) and  +(0,.25)  .. (-.5,7);
\pgfsetlinewidth{.5*\pgflinewidth}
\draw[orange,fill] (-.5,8)-- (.5,8) .. controls  +(0,-.5) and  +(0,.25)  .. (-.5,7);
\draw[red]  (.5,8) .. controls  +(0,-.5) and  +(0,.25)  .. (-.5,7);
\draw[fill, white,opacity =.3] (-1.75,9) rectangle (1.75,7);
\draw[white] (-1.5,7) arc (-180:180:1.5 and 1);
\draw[blue,->] (-1.5,7) arc (180:0:1.5 and 1);
\end{scope}

\begin{scope}[shift={+(7.7,-4)}]

\draw[gray,fill] (.5,9) rectangle (1.75,7);
\draw[orange,fill] (-.5,9) rectangle (-1.75,7);
\draw[orange, fill] (-.5,9) .. controls +(0,-.25) and +(0,.5) .. (.5,8) -- (-.5,8);
\draw[red] (-.5,9) .. controls +(0,-.25) and +(0,.5) .. (.5,8);
\pgfsetlinewidth{2*\pgflinewidth}
\draw[white] (.5,9) .. controls +(0,-.25) and +(0,.5) .. (-.5,8);
\pgfsetlinewidth{.5*\pgflinewidth}
\draw[fill,gray] (.5,9) .. controls  +(0,-.25) and +(0,.5) .. (-.5,8) -- (.5,8);
\draw (.5,9) .. controls  +(0,-.25) and +(0,.5) .. (-.5,8);

\draw[fill,gray](.5,8)-- (-.5,8) .. controls +(0,-.5) and  +(0,.25).. (.5,7);
\draw (-.5,8) .. controls +(0,-.5) and  +(0,.25).. (.5,7);
\pgfsetlinewidth{2*\pgflinewidth}
\draw[white](.5,8) .. controls  +(0,-.5) and  +(0,.25)  .. (-.5,7);
\pgfsetlinewidth{.5*\pgflinewidth}
\draw[orange,fill] (-.5,8)-- (.5,8) .. controls  +(0,-.5) and  +(0,.25)  .. (-.5,7);
\draw[red]  (.5,8) .. controls  +(0,-.5) and  +(0,.25)  .. (-.5,7);

\draw[fill, white,opacity =.3] (-1.75,9) rectangle (1.75,7);

\draw[blue] (-1.5,9) arc (-180:0:1.5 and 1);
\node[blue]  at (2.9,9) {$\bf a_i$};
\node[]  at (0,12.5) {$i$-th clasp};
\node[]  at (0,6) {$(i+1)$-th clasp};
\end{scope}
\end{tikzpicture}
\end{subfigure}
\hfill
\caption{A diagram for the link $L_{t+1}$ (left), a C-complex bounding it (centre), and the generator associated to the $i$-th clasp for $i\neq 2t$ (right). The boxes marked with $t$ indicate the presence of $\vert t \vert$ full twists equal to those illustrated if $t>0$, and their mirror if $t<0$. 
}
\label{fig:2bridge}
\end{figure}
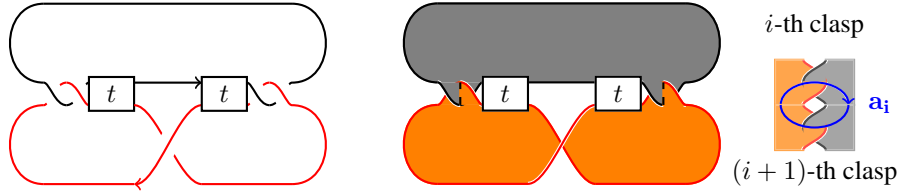

\begin{prop}\label{prop:stgeqsn}
There is an infinite family $\lbrace L_t \rbrace_{t \geq 1}$ of links for which the slice-torus bound is sharp, but for which the signature and the linking bounds are not. Furthermore, the difference between the values provided for the links  $\lbrace L_t \rbrace_{t \geq 1}$ by any two among these bounds increases linearly in $t$.
\end{prop}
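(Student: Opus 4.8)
The plan is to take $\{L_t\}_{t\ge 1}$ to be the two–component family determined by the diagram on the left of Figure~\ref{fig:2bridge}, whose two components are unknots, and to pin down $\wsp(L_t)$ by squeezing it between an explicit splitting sequence and the slice–torus bound of Corollary~\ref{cor:ACbound}. First I would read a splitting sequence directly off the diagram: changing the crossings in the two twist boxes (equivalently, a suitable subset of the $2t$ clasps) unlinks the two components, and counting the crossing changes used gives an upper bound $\wsp(L_t)\le \alpha t + O(1)$ for an explicit slope $\alpha$. For the matching lower bound, since both components are unknotted we have $\nu(K_1)=\nu(K_2)=0$ for every slice–torus invariant $\nu$, so the slice–torus bound reduces to $|\nu(L_t)|$; because $L_t$ is alternating its slice–torus invariants are determined by classical Seifert data, and I would compute $|\nu(L_t)|=\alpha t + O(1)$, exactly the splitting upper bound. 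This shows $\wsp(L_t)=\alpha t + O(1)$ and that the slice–torus bound is sharp.

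Next I would show that the signature bound of Theorem~\ref{thm:SignatureIntro} is strictly weaker. Here the C–complex drawn in the centre of Figure~\ref{fig:2bridge} does the work: its $\sim 2t$ clasps produce explicit generalized Seifert matrices $A^\varepsilon$ (with the generators $\mathbf a_i$ indicated on the right of the figure), and hence an explicit Hermitian family $H(\underline\omega)=\sum_\varepsilon\prod_i(1-\overline\omega_i^{\varepsilon_i})A^\varepsilon$ as in Definition~\ref{def:Signature}. I would diagonalise $H(\underline\omega)$ in closed form to read off $\sigma_{L_t}(\underline\omega)$ and $\eta_{L_t}(\underline\omega)$ for every $\underline\omega\in\mathbb{T}^2_*$, and then maximise $\tfrac14\bigl(|\sigma_{L_t}(\underline\omega)|+|\eta_{L_t}(\underline\omega)-1|+3|\lk(K_1,K_2)|\bigr)$ over $\underline\omega$. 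The point is that this maximum is again linear in $t$, with slope $\beta<\alpha$: morally, the factor $\tfrac14$ in Theorem~\ref{thm:SignatureIntro} discounts the signature contribution relative to the slice–torus normalisation, so the best signature bound is $\beta t + O(1)<\wsp(L_t)$.

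The linking bound \eqref{eq:linking_bound_simple} is then handled directly: I would compute $\lk(K_1,K_2)$ from the clasps and check that $L_t$ carries no obstructive sublink collection beyond the trivial one, so that the linking bound equals $|\lk(K_1,K_2)|$, a third linear function $\gamma t + O(1)$. Choosing the twisting so that $\alpha$, $\beta$ and $\gamma$ are pairwise distinct (with $\alpha$ largest) yields all three assertions at once: the slice–torus bound is sharp, the other two are not, and since three linear functions with distinct slopes have pairwise differences that grow linearly, the gap between any two of the bounds increases linearly in $t$.

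The main obstacle is the uniform control of the multivariable signature in the second step: the matrix $H(\underline\omega)$ grows with $t$, so rather than a single evaluation one must understand $\operatorname{sign}H(\underline\omega)$ as a function on all of $\mathbb{T}^2_*$ and verify that the resulting bound stays strictly below $\alpha t + O(1)$ for \emph{every} $\underline\omega$, in particular ruling out that some non-obvious choice of $\underline\omega$ (not just $\underline\omega=(-1,-1)$) makes the signature bound catch up to the slice–torus bound. I expect this to reduce, after a change of basis adapted to the clasp generators $\mathbf a_i$, to counting sign changes in a tridiagonal-type matrix whose entries are the trigonometric weights $1-\overline\omega_i^{\varepsilon_i}$, so that $\sigma_{L_t}(\underline\omega)$ and $\eta_{L_t}(\underline\omega)$ can be given in closed form and maximised exactly.
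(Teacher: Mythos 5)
Your choice of family and your first step (sharpness of the slice--torus bound) follow the paper's proof: $\wsp(L_t)\le t$ because a single self crossing change turns $L_{t+1}$ into $L_t$, the components are unknots, and $\nu(L_t)=t$ (the paper obtains the lower bound $\nu(L_t)\ge t$ from Theorem~1.3 of the Cavallo--Collari paper applied to the diagram, rather than from the alternating signature formula, but either route is viable). The genuine gap is in your treatment of the signature bound, which is exactly the step you flag as ``the main obstacle'' and leave as an expectation. You do not need to diagonalise $H(\underline{\omega})$, compute $\sigma_{L_t}$ and $\eta_{L_t}$ in closed form, or maximise anything over $\mathbb{T}^2_*$. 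The C-complex $F_t$ in Figure~\ref{fig:2bridge} consists of two disks joined by $2t$ clasps, so $\operatorname{rank}_{\mathbb{Z}}H_1(F_t)=2t-1$; consequently $H(\underline{\omega})$ is a $(2t-1)\times(2t-1)$ Hermitian matrix for \emph{every} $\underline{\omega}$, and
\[
\left\vert \sigma_{L_t}(\underline{\omega})\right\vert+\left\vert \eta_{L_t}(\underline{\omega})-1\right\vert \le \left\vert \sigma_{L_t}(\underline{\omega})\right\vert+\left\vert \eta_{L_t}(\underline{\omega})\right\vert+1 \le (2t-1)+1=2t.
\]
Since $\lk(K_1,K_2)=0$, the linking term of Theorem~\ref{thm:SignatureIntro} vanishes and the signature bound is at most $t/2$, hence at most $\lceil t/2\rceil<t$ for $t\ge 2$, uniformly in $\underline{\omega}$. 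This one-line rank count is the missing idea; without it (or without actually carrying out the closed-form diagonalisation you only sketch) the strict inequality between the two bounds is not proved.

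A secondary problem concerns the linking bound. In this family the clasps come in cancelling pairs, so $\lk(K_1,K_2)=0$ and the bound of Lemma~\ref{lem:pre_linking_bound} is the constant $1$ (coming from $n_{oc}$, since $L_t$ is non-split with vanishing linking matrix), not a linear function $\gamma t+O(1)$. Your closing move of ``choosing the twisting so that $\alpha,\beta,\gamma$ are pairwise distinct'' is not available: the family is fixed by the figure, and if you modified it so that $\lk$ grows linearly you would change both the signature bound (which then acquires the term $\tfrac34\vert\lk\vert$) and the value of $\wsp$, forcing you to redo the sharpness analysis. As written, your argument asserts three linear slopes none of which is computed, so the final claim about linearly growing gaps is not established.
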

\begin{proof}
Consider the diagram representing the 2-bridge link $L_t$ illustrated in Figure \ref{fig:2bridge}.
It can easily be seen that a self crossing change (on the unique crossing involving only one component) turns $L_{t+1}$ into $L_{t}$.
Thus, we deduce that
$\wsp(L_{t}) \leq t.$
Notice that the linking number of $L_t$ is zero, and therefore the linking bound from Lemma~\ref{lem:pre_linking_bound} is $1$ (and is independent of $t$).

We now use slice-torus link invariants to establish the equality $\wsp(L_{t}) = t$.
A quick computation using~\cite[Theorem 1.3]{CavalloCollari} shows that $\nu(L_{t}) \geq t$, for any slice-torus link invariant~$\nu$. 
Since the components of $L_t$ are unknots, it follows from Corollary \ref{cor:ACbound}  that 
$$\wsp(L_t) = \nu(L_{t}) = t.$$

It remains to show that the signature bound cannot be used to determine~$\wsp(L_t)$.~%
The generalised Seifert matrices corresponding to the C-complex~$F_t$ shown in Figure \ref{fig:2bridge} are of size~$\rk_\Z H_1(F_t)=2t-1$. We deduce that
\[ \vert \sigma_{L_t}(\omega) \vert + \vert \eta_{L_{t}}(\omega) -1 \vert \leq  \vert \sigma_{L_t}(\omega) \vert + \vert \eta_{L_{t}}(\omega) \vert + 1 \leq (2t -1) + 1 = 2 t.\]
Thus, the lower bound in Theorem \ref{thm:SignatureIntro} does not exceed $\lceil t/2 \rceil$, and therefore cannot be sharp for $t \geq 2$.
This concludes the proof of the proposition.
\end{proof}

Next, we construct an infinite family of links for which the signature bound is stronger than the slice-torus  and linking bounds.

\begin{prop}\label{prop:sngeqst}
There is an infinite family $\lbrace L^\prime_n \rbrace$ of links for which the signature bound is sharper than the linking and the slice-torus bounds, and their difference grows linearly with $n$.
\end{prop}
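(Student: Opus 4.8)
The plan is to reduce the statement to a single \emph{seed} link and then amplify the gap by taking disjoint unions. Concretely, I would first isolate a link $B$ for which the signature bound of Theorem~\ref{thm:SignatureIntro} is \emph{sharp} and \emph{strictly} exceeds both the slice-torus bound of Corollary~\ref{cor:ACbound} and the linking bound of Lemma~\ref{lem:pre_linking_bound}. Such a seed is available from the computations summarised in Table~\ref{tab:small_linksI}: among the $130$ links with $9$ or fewer crossings there are links whose weak splitting number is determined by the signature bound precisely because the linking and slice-torus bounds fall short, so that any such $B$ satisfies
\[
(\text{signature bound of } B) = \wsp(B) > \max\{(\text{slice-torus bound of } B),\ (\text{linking bound of } B)\}.
\]
I would then set $L'_n := \bigsqcup_{i=1}^n B$, the disjoint union of $n$ copies of $B$, and show that each of the three bounds is multiplied by $n$, so that the positive differences grow linearly.

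Second, I would record the additivity of the three quantities under disjoint union. For the slice-torus invariants this is immediate, since they are additive under disjoint unions (as used in Corollary~\ref{cor:ACbound}); for the multivariable signature and nullity it follows from the disjoint-union formulas $\sigma_{\bigsqcup_i K_i}=\sum_i\sigma_{K_i}$ and $\eta_{\bigsqcup_i K_i}=\sum_i\eta_{K_i}+\ell-1$ recalled in the proof of Theorem~\ref{thm:SignatureIntro}. A short check shows that the combination appearing inside each absolute value of the signature bound --- the signature defect, the corrected nullity defect $\eta_L-\sum_i\eta_{K_i}-\ell+1$, and $\sum_{i<j}|\lk(K_i,K_j)|$ --- is additive under disjoint union, the $-\ell+1$ corrections cancelling exactly. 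The only subtlety is the absolute values in the signature bound: evaluating every copy at the \emph{same} $\underline{\omega}$ forces all $n$ defects to share a sign, so the absolute value of the total defect is $n$ times that of a single copy. Hence the full signature bound of $L'_n$ equals $n$ times that of $B$, and likewise for the slice-torus bound. The same holds for the linking bound, since cross-copy linking numbers vanish and any obstructive sublink must lie in a single copy (a sublink meeting two copies is split, hence not obstructive), whence $n_{oc}(L'_n)=n\,n_{oc}(B)$.

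Third, I would establish sharpness for $L'_n$ without a separate additivity statement for $\wsp$. On one hand $\wsp(L'_n)\le n\,\wsp(B)$, by splitting each copy independently. On the other hand, Theorem~\ref{thm:SignatureIntro} gives $\wsp(L'_n)\ge(\text{signature bound of } L'_n)=n\cdot(\text{signature bound of } B)=n\,\wsp(B)$. Thus $\wsp(L'_n)=n\,\wsp(B)$ and the signature bound is sharp for $L'_n$. Writing $\gamma<\wsp(B)$ and $\lambda<\wsp(B)$ for the slice-torus and linking bounds of $B$, the corresponding bounds for $L'_n$ are $n\gamma$ and $n\lambda$, so the differences $n\,(\wsp(B)-\gamma)$ and $n\,(\wsp(B)-\lambda)$ are positive and grow linearly in $n$, as required.

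The main obstacle is the base case: verifying that a concrete seed link $B$ simultaneously satisfies all three properties, which amounts to computing its multivariable signature and nullity at a well-chosen $\underline{\omega}$ (to pin down the sharp signature bound) together with a slice-torus invariant and the quantity $n_{oc}$ (to confirm the other two bounds are strictly smaller). This is a finite but delicate computation; choosing $B$ with unknotted components and vanishing pairwise linking numbers simplifies it, since then $\sigma_{K_i}=\eta_{K_i}=\nu(K_i)=0$ and the linking bound reduces to $n_{oc}(B)$. Finally, should non-split examples be preferred, I would replace the disjoint union by an iterated connected sum of copies of $B$ along a fixed unknotted component, at the cost of invoking the corresponding additivity of $\sigma_L$, $\eta_L$ and the slice-torus invariants under connected sum.
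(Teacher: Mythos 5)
Your amplification step is sound and in fact cleaner than the paper's: the paper builds $L'_n$ as an iterated \emph{connected sum} of copies of a seed ($L10a129$), which forces it to deal with the mere sub-additivity of slice-torus invariants under connected sum and with a careful choice of summing components to control $n_{oc}$, whereas your disjoint-union construction gives exact additivity of all three bounds (your checks that the $-\ell+1$ corrections cancel, that the defects of identical copies share a sign, and that obstructive sublinks cannot straddle two split factors are all correct). However, there are two genuine problems. First, the entire substance of the proposition is the base case, which you defer: you must actually exhibit a seed and compute its signature bound at a specific $\underline{\omega}$, its slice-torus bound, and its linking bound. The paper does exactly this, computing the signature bound of $L10a129$ at $\omega=e^{\pi i/4}$ to be $10/4$ from the Seifert matrices in LinkInfo via $\sigma_J(\omega,\ldots,\omega)=\sigma_J^{LT}(\omega)+\sum_{i<k}\lk(J_i,J_k)$, and bounding the slice-torus invariant by $2$ using the formula $\nu(L)=\bigl(-\sigma(L)+\ell-1\bigr)/2$ for non-split alternating links. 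Without such a computation your proof is a reduction, not a proof.

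Second, and more seriously, your step three relies on the seed's signature bound being \emph{sharp}, i.e.\ exactly equal to $\wsp(B)$, and this is both unnecessary and probably unattainable. The bound of Theorem~\ref{thm:SignatureIntro} carries a factor $1/4$, so it is rarely an integer; when Table~\ref{tab:small_linksI} records that the signature bound ``determines'' $\wsp(L)=k$, this typically means the bound exceeds $k-1$ and one then rounds up, not that it equals $k$. Indeed the paper's own seed has signature bound $10/4$, strictly less than $\wsp(L10a129)=3$. If the seed's bound is, say, $10/4$, then your $L'_n$ has signature bound $10n/4$, and you cannot conclude $\wsp(L'_n)=n\,\wsp(B)$ nor that the bound is sharp for $L'_n$; the inequality $\wsp(L'_n)\le n\,\wsp(B)$ and the lower bound $10n/4$ leave a linear gap. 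Fortunately Proposition~\ref{prop:sngeqst} does not claim sharpness: it only asks that the signature bound exceed the linking and slice-torus bounds by a linearly growing amount. So you should delete the sharpness hypothesis and the third step entirely, and instead require only that the signature bound of $B$ strictly exceed the other two bounds of $B$; your additivity argument then yields the proposition directly, once the base case is actually verified.
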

\begin{proof}
A brief inspection of the diagram representing the 3-component link $L = L10a129$, see \cite{LinkInfo}, shows that $\wsp(L) \leq 3$.
Since $L$ contains the Whitehead link as a sublink, Lemma~\ref{lem:pre_linking_bound} implies that $\wsp(L) = 3$.
Consider the link~$L^\prime_n$ obtained by connect-summing \footnote{For the definition of connected sums of links, see for instance \cite[Section 4.6]{CromwellBook}.} $n$ copies of $L$.
This connected sum can be taken so that the linking number bound fails; if we take the connected sum along two components each of which is part of a Whitehead sublink in the corresponding copy of the $L10a129$, then the number of obstructive sublinks does not increase (thus the linking bound is $2n + 1$).
Note however that the choice of the component where the connected sum is performed is immaterial for the remainder of the argument.
For instance, regardless of this choice, we have $\wsp(L^\prime_n) \leq 3n$.
 
By~\cite[Proposition~2.12]{CimasoniFlorens}, the mutivariable signature and nullity (as well as linking numbers) are additive with respect to the connected sum. Therefore, it suffices to compute the signature bound for $L$ to obtain the bound for $L_n'$.~%
Denote by $\sigma^{LT}(\omega)$ the Levine-Tristram signature, then~\cite[Proposition~2.5]{CimasoniFlorens} asserts that for any link $J$ we have
\begin{equation}
\label{eq:CFProposition25}
\sigma_J(\omega,\ldots,\omega)=\sigma_J^{LT}(\omega)+\sum_{i<k} \ell k(J_i,J_k).
\end{equation}
Using the Seifert matrices for $L10a29$ provided by LinkInfo~\cite{LinkInfo}, we see that the signature bound for~$L$ at~$\omega=e^{\pi i/4}$ is~$10/4$.~%
Using the aforementioned additivity argument, and since the number of components increases by~$2$ at each connected sum, we get
\[   \frac{10}{4}n  \leq \wsp(L_n') \leq 3n.\]
It remains to argue that the slice-torus bound is not greater than $10n/4$.
While slice-torus invariants are not additive under connected sums, they are known to satisfy the following sub-additivity property~\cite[Remark 2.8]{CavalloCollari}:
\[\nu(L_1) + \nu(L_{2}) - 1 \leq \nu (L_{1} \#_{K_{1},K_{2}} L_{2}) \leq \nu(L_1) + \nu(L_{2}).\]
On the other hand, since $L$ is non-split and alternating, we have \footnote{Alternating diagrams are homogeneous \cite{Cromwell}, and thus the bound in~\cite[Theorem 1.3]{CavalloCollari} is sharp for these diagrams~\cite[Section~5 and Remark~6.3]{Kawamura15}. In particular, the value of all slice-torus invariants coincides for non-split alternating links. As $s(L) = -\sigma(L)$ for non-split alternating links,~\eqref{eq:nu and sigma for alteranting} follows from~\cite[Example~2.2]{CavalloCollari}.} 
\begin{equation}
\label{eq:nu and sigma for alteranting}
\nu(L) = \frac{-\sigma(L) + \ell -1}{2}.
\end{equation} 
Therefore, we obtain $\nu(L) \in \{ 0, 2\}$ by~\cite{LinkInfo}.~Consequently, regardless of this choice, we obtain~$ \nu(L_n') \leq n \nu(L)\leq 2n$.~%
This concludes the proof of the proposition.
\end{proof}

We conclude this section by using the examples of Proposition~\ref{prop:sngeqst} to provide examples where the linking bound is stronger than the signature bound and the slice-torus bound.
\begin{rem}\label{rem:linking is better}
As in Proposition~\ref{prop:sngeqst}, we consider the 3-component link $L:= L10a129$.
If, instead of performing the connected sums along one of the components of the Whitehead sublink of $L$ (as we did in Proposition~\ref{prop:sngeqst}), one performs the connected sum along the third component, then the resulting link $L_n'$ contains as many disjoint obstructive sublinks as connected summands. Thus, the linking bound gives $3n \leq \wsp(L^\prime_n)$, and the equality follows. The fact that the other two bounds cannot be sharp in this case (with an arbitrarily high difference) follows from the proof of Proposition~\ref{prop:sngeqst}.
%
\end{rem}

\section{Bounds from Heegaard-Floer homology}
\label{sub:HF}
We prove Theorem \ref{thm:newobstructionHFL}, which provides lower bounds for $\wsp$ via link Floer homology.
First however, we briefly review the H-function of a link~\cite{GorskyNemethi, BorodzikGorsky}, an invariant that is extracted from the minus flavor $\cfl(L)$ of link Floer homology~\cite{OzsvathSzabo, Rasmussen, OzsvathSzaboLinks}.
\medbreak
Let $\mathbb{F}_2$ be the field with two elements.
Given an $\ell$-component link $L$, the complex $\cfl(L)$ is a complex of free $\mathbb{F}_2[U_1, \ldots, U_\ell]$-modules, endowed with an absolute $\bZ$-grading $d$ and a filtration for each component of $L$. 
The action of the variable $U_i$ drops the $d$-grading by 2, and each filtration level by $1$.
If we use $\underline{\lk}(L)\in \mathbb{Q}^\ell$ to denote the vector with~$\lk(K_i, L\setminus K_i)/2$ as its $i$-th entry, then the $\ell$ filtrations of $\cfl(L)$ can be re-interpreted as a unique filtration $\mathscr F$ indexed by an element of the lattice
$$\mathbb{H}(L)=\mathbb{Z}^\ell + \underline{\lk}(L).$$
In fact, there is a filtered complex $\cfl(D)$ for each Heegaard diagram $D$ of~$L$, and~$\cfl(L)$ is the filtered homotopy type, as a complex of~$\mathbb{F}_2[U_1, \ldots, U_\ell]$-modules, of \emph{any}~$\cfl(D)$~\cite{OzsvathSzaboLinks}.

As the actions of the $U_i$'s on $\cfl(L)$ are all homotopic~\cite{OzsvathSzaboLinks}, the homology of~$\cfl(L)$ can be seen as an $\mathbb F_2[U]$-module, where $U$ acts as any of the $U_i$.
It is also known that, for each $\underline m\in\mathbb{H}(L)$, the homology $H_*(\mathscr F_{\underline m}\cfl(L))$ of the $\underline m$-th  filtration level decomposes into an $\mathbb F_2[U]$-summand and an $\mathbb {F}_2[U]$-torsion summand~\cite{Surgeries}. 
The \dfn{$H$-function} of $L$ at~$\underline m\in\mathbb{H}(L)$ is then defined as
\[ H_{L}(\underline{m}) = \min \left\lbrace d \:\vert\: \rk_{\mathbb{F}[U]}\left(H_{-2d}\left(\mathscr{F}_{\underline{m}}\cfl(L)\right)\right) \ne 0 \right\rbrace.\]
This function was first introduced by Gorsky and Nemethi~\cite{GorskyNemethi}, see also~\cite{BorodzikGorsky}.
It is known that $H_L$ takes non-negative values~\cite[Proposition 3.10]{GorskyNemethi} and, as in~\cite{BorodzikGorsky}, we work with the following shifted version of $H_L$.
\begin{defn}
\label{def:J}
The \dfn{$J$-function} of an $\ell$-component link $L$ is the function
\[J_L \colon \bZ^{\ell}\longrightarrow\bZ_{\geq0} \quad  \underline m \mapsto H_L(\underline m+\underline\lk(L)).\]
\end{defn}
We use $\underline{e}_i \in \Z^\ell$ to denote the $i$-th vector of the canonical basis. We collect the properties of the $J$-function in the following proposition; proofs can be found in~\cite[Propositions 3.10 and 3.11, and Theorem 6.20]{BorodzikGorsky}.

\begin{prop}\label{prop:propertiesJ}
For an oriented link $L$, the $J$-function satisfies the following properties.
\begin{enumerate}
\item  For $i=1,\ldots, \ell$, and $\underline{v}\in\Z^\ell$, the function $J_L$ satisfies
\[ J_{L}(\underline{v}) \leq J_{L}(\underline{v} - \underline{e}_i)  \leq J_{L}(\underline{v}) + 1.\]
\item Let $L^\prime$ be obtained from $L$ via a positive crossing change, and let $\underline{v}\in\Z^\ell$.
\begin{enumerate}
\item  if the crossing change is a self crossing change on the $i$-th component, then
\[ J_{L^\prime}(\underline{v} + \underline{e}_i) \leq J_{L}(\underline{v}) \leq J_{L^\prime}(\underline{v});\]
\item  if the crossing change is mixed and involves the $i$-th and the $j$-th components of~$L$ then, for each $\ast\in\{i,j \}$,
\[ J_{L^\prime}(\underline{v}) \leq J_{L}(\underline{v}) \leq J_{L^\prime}(\underline{v} - \underline{e}_\ast).\]
\end{enumerate}
\item If $L=K_1 \cup \ldots \cup K_n$ is a completely split link, then
\[ J_{L}(v_1,\ldots,v_\ell) = \sum_{i=1}^{\ell} J_{K_i}(v_{i}).\]
\end{enumerate}
\end{prop}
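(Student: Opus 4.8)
The plan is to establish the three properties separately, each from a structural feature of the filtered complex $\cfl(L)$; the substantive work is concentrated in the crossing-change inequalities of item (2). Throughout I write $\underline{m}=\underline{v}+\underline{\lk}(L)$, so that $J_L(\underline v)=H_L(\underline m)$, and I track the free $\mathbb{F}_2[U]$-tower inside $H_*(\mathscr F_{\underline m}\cfl(L))$, whose bottom generator sits in homological degree $-2H_L(\underline m)$. For the additivity in item (3) the plan is to invoke the K\"unneth principle for split unions: for a completely split link $L=K_1\sqcup\cdots\sqcup K_\ell$ one has a filtered homotopy equivalence $\cfl(L)\simeq \cfl(K_1)\otimes\cdots\otimes\cfl(K_\ell)$ over $\mathbb{F}_2$, with the multifiltration equal to the product of the single filtrations and each $U_i$ acting on the $i$-th factor. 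I would then note that the $\underline v$-th filtration level of the tensor product is the tensor product of the $v_i$-th levels, so that, after passing to homology, the free $\mathbb{F}_2[U]$-tower of the product lies in the homological degree equal to the sum of the degrees carrying the free towers of the factors. Reading off the minimal such degree gives $H_L=\sum_i H_{K_i}$; since a split link has $\underline{\lk}(L)=0$, this is exactly $J_L(v_1,\dots,v_\ell)=\sum_i J_{K_i}(v_i)$.

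For the Lipschitz estimate in item (1), I would exploit two canonical chain maps relating the filtration levels $\underline m$ and $\underline m-\underline e_i$. The filtered inclusion $\mathscr F_{\underline m-\underline e_i}\hookrightarrow\mathscr F_{\underline m}$ preserves the $d$-grading; chasing the bottom generator of the free tower through the induced map shows that the tower of the smaller level maps into that of the larger, giving $H_L(\underline m)\leq H_L(\underline m-\underline e_i)$, i.e. $J_L(\underline v)\leq J_L(\underline v-\underline e_i)$. For the reverse bound I would use the action of $U_i$: because $U_i$ drops the $i$-th filtration level by one and the $d$-grading by two, it defines a chain map $\mathscr F_{\underline m}\to\mathscr F_{\underline m-\underline e_i}$, and since $U_i$ is homotopic to the $U$-action generating the tower, it carries the tower at level $\underline m$ onto the tower at level $\underline m-\underline e_i$ with a degree shift of exactly two. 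This yields $H_L(\underline m-\underline e_i)\leq H_L(\underline m)+1$, i.e. $J_L(\underline v-\underline e_i)\leq J_L(\underline v)+1$, completing item (1).

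The genuinely substantive content is item (2), and here the plan is to realise the crossing change as a surgery and to extract filtered maps between $\cfl(L)$ and $\cfl(L')$ with controlled grading and filtration shifts. Concretely, a positive crossing change taking $L$ to $L'$ is realised by $\pm1$-framed surgery on a small unknot encircling the two strands meeting at the crossing, and the associated surgery/cobordism maps provide filtered chain maps $\cfl(L)\to\cfl(L')$ and $\cfl(L')\to\cfl(L)$. For a self crossing change on the $i$-th component only the $i$-th Alexander filtration is displaced, so composing these maps (together with the $U_i$-action used in item (1)) pins the free $\mathbb{F}_2[U]$-tower down to within one filtration step in the $i$-th coordinate, yielding $J_{L'}(\underline v+\underline e_i)\leq J_L(\underline v)\leq J_{L'}(\underline v)$. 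For a mixed crossing change involving components $i$ and $j$, the change additionally shifts $\lk(K_i,K_j)$ by one, hence translates the base lattice $\mathbb{H}(L)$ by half-integers in the $i$-th and $j$-th coordinates; after absorbing this translation into the $\underline{\lk}$-shift built into the definition of $J$, the same analysis of the surgery maps delivers $J_{L'}(\underline v)\leq J_L(\underline v)\leq J_{L'}(\underline v-\underline e_\ast)$ for each $\ast\in\{i,j\}$. The principal obstacle, and the step demanding the greatest care, is proving that these surgery maps are genuinely filtered, computing their precise filtration-degree shifts, and verifying that the relevant composites act as a power of $U$ on the tower so that no information about the free part is lost; this is exactly where the large-surgery description of $\cfl$ and the naturality of the maps from~\cite{OzsvathSzaboLinks, Surgeries} are essential.
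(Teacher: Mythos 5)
Note first that the paper does not actually prove this proposition: it cites \cite[Propositions 3.10 and 3.11, and Theorem 6.20]{BorodzikGorsky}, so your sketch must be measured against those arguments. Your item (1) is essentially the standard (and correct) argument: the inclusion $\mathscr F_{\underline m-\underline e_i}\hookrightarrow \mathscr F_{\underline m}$ and the map $U_i\colon \mathscr F_{\underline m}\to\mathscr F_{\underline m-\underline e_i}$, followed by a tower chase. Item (3), however, rests on a false K\"unneth statement: for a \emph{disjoint} (split) union, $\cfl(L_1\sqcup L_2)$ is \emph{not} filtered homotopy equivalent to $\cfl(L_1)\otimes_{\mathbb F_2}\cfl(L_2)$ --- the plain tensor is the \emph{connected-sum} formula (taken over $\mathbb F_2[U]$), not the split-union one. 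Already for the two-component unlink your model gives $\mathbb F_2[U_1,U_2]$ with zero differential, whose homology is free of rank one over $\mathbb F_2[U_1,U_2]$ with $U_1\neq U_2$; but on the homology of any genuine $\cfl(L)$ the $U_i$ act identically (they are homotopic), and indeed $H_*(\cfl(O_2))\cong\mathbb F_2[U]$, while $\widehat{HFL}$ of the unlink has rank $2$, not $1$. The correct split-union model inserts, for each splitting, a mapping cone of $U_i+U_j$ on the tensor product (equivalently, one argues via the large surgery theorem: large surgeries on a split link decompose as connected sums, and $d$-invariants are additive). Without this correction, the phrase ``the free $\mathbb F_2[U]$-tower of the product'' is not even well-defined, since $H_*(\mathscr F_{\underline v})$ of the plain tensor product does not decompose as one $\mathbb F_2[U]$-summand plus torsion; your derivation goes through a complex that simply is not $\cfl(L)$, even though the conclusion it reaches is true.

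In item (2) the genuinely substantive step is assumed rather than proved. You posit filtered chain maps $\cfl(L)\to\cfl(L')$ and $\cfl(L')\to\cfl(L)$ induced by the $\pm1$-framed surgery, with precise filtration shifts and composites acting as powers of $U$ on the towers --- but this is exactly the content of the theorem, and no such pair of maps with two-sided control is available off the shelf (the skein triangle, for instance, does not give both inequalities). Borodzik and Gorsky prove (2a)--(2b) by a different mechanism: a crossing change gives an immersed annulus; blowing up its double point produces a negative-definite cobordism between \emph{large surgeries} on $L$ and $L'$, and Ozsv\'ath--Szab\'o's $d$-invariant inequality, combined with the identification of $H_L(\underline m)$ with $d$-invariants of large surgeries (the same identification that repairs item (3)), yields the stated inequalities. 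Your remark about absorbing the half-integer translation of $\mathbb H(L)$ in the mixed case is correct and explains the asymmetric $-\underline e_\ast$; but as written, your plan for (2) is circular at its key step --- the ``principal obstacle'' you name at the end is the theorem itself --- so both (2) and (3) need to be rerouted through the large-surgery/$d$-invariant argument or the corrected cone model.
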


Using all three items of Proposition \ref{prop:propertiesJ} and Theorem~\ref{prop:main_result1} (with $a=-1,b=0$ and $b'=1$), we obtain the following result.
\begin{cor}
\label{cor:Jfunction}
For each oriented link $L$, we have the following:
\[ \left\vert J_{L}(v_1,\ldots,v_\ell) - \sum_{i=1}^{\ell} J_{K_i}(v_{i}) \right\vert \leq\wsp(L).\]
\end{cor}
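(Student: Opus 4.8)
The plan is to apply Theorem~\ref{prop:main_result1} to the single $\R$-valued invariant obtained by fixing the evaluation point, namely $I(L) := J_L(v_1,\ldots,v_\ell)$ for a fixed $\underline v = (v_1,\ldots,v_\ell)\in\Z^\ell$. The hint in parentheses tells us the target constants: we want to verify that this $I$ satisfies the hypothesis~\eqref{eq:self} of Theorem~\ref{prop:main_result1} with $a=-1$, $b=0$, and $b'=1$. With these values one has $b-a = 1$ and $\delta = (b-a)-b' = 1-1 = 0$, so the conclusion~\eqref{eq:SkeinBound} collapses to exactly $|I(L) - I(\bigsqcup_i K_i)| \leq \wsp(L)$, with the linking-number term disappearing. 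The remaining task is therefore purely to check the two crossing-change inequalities and to identify $I(\bigsqcup_i K_i)$.

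First I would verify the self-crossing-change bound $I(L) - I(L') \in [a,b] = [-1,0]$ for a \emph{positive} self crossing change. By the convention in the excerpt, a positive crossing change turns a negative crossing into a positive one; if $L'$ is obtained from $L$ by a positive self crossing change on the $i$-th component, this means $L$ is obtained from $L'$ by such a change, so I apply Proposition~\ref{prop:propertiesJ}(2a) with the roles of $L$ and $L'$ matched to the statement there. That item reads $J_{L'}(\underline v + \underline e_i) \leq J_L(\underline v) \leq J_{L'}(\underline v)$; the right-hand inequality gives $J_L(\underline v) - J_{L'}(\underline v) \leq 0$, which is the upper bound $b=0$. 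For the lower bound $a=-1$, I would combine the left-hand inequality $J_{L'}(\underline v + \underline e_i) \leq J_L(\underline v)$ with the monotonicity estimate of Proposition~\ref{prop:propertiesJ}(1), which controls how $J$ changes under a shift by $\underline e_i$ by at most $1$; this yields $J_{L'}(\underline v) - 1 \leq J_L(\underline v)$, i.e. $J_L(\underline v) - J_{L'}(\underline v) \geq -1 = a$. Thus the difference lies in $[-1,0]$ as required.

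Next I would verify the mixed-crossing-change bound $I(L)-I(L') \in [-b',b'] = [-1,1]$. Here I use Proposition~\ref{prop:propertiesJ}(2b): if $L'$ is obtained from $L$ by a mixed positive crossing change involving components $i,j$, then $J_{L'}(\underline v) \leq J_L(\underline v) \leq J_{L'}(\underline v - \underline e_\ast)$ for $\ast \in \{i,j\}$. The left inequality gives $J_L(\underline v) - J_{L'}(\underline v) \geq 0 \geq -1$, and the right inequality combined once more with the monotonicity bound of item (1) gives $J_L(\underline v) \leq J_{L'}(\underline v - \underline e_\ast) \leq J_{L'}(\underline v) + 1$, so the difference is at most $1$. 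Hence $I(L)-I(L')\in[-1,1]$, confirming $b'=1$. Finally, item (3) of Proposition~\ref{prop:propertiesJ} identifies the split-union value: $I(\bigsqcup_i K_i) = J_{\bigsqcup_i K_i}(\underline v) = \sum_{i=1}^\ell J_{K_i}(v_i)$. Substituting everything into~\eqref{eq:SkeinBound} with $\delta = 0$ yields the claimed inequality.

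The steps here are all short verifications, so I do not anticipate a genuine obstacle; the only point requiring care is the \emph{sign/direction bookkeeping} in matching Theorem~\ref{prop:main_result1}'s convention (which measures $I(L)-I(L')$ under a positive self crossing change taking $L$ to $L'$) against the convention in Proposition~\ref{prop:propertiesJ}(2), where $L'$ is the link obtained from $L$ by a \emph{positive} crossing change. One must make sure that "$L$ and $L'$ are related by a positive self crossing change" in the hypothesis of Theorem~\ref{prop:main_result1} is read consistently with the labelling in the properties of $J$, and that the monotonicity item (1) is invoked in the correct direction (shift by $+\underline e_i$ versus $-\underline e_i$). Once this orientation/direction matching is pinned down, the two interval memberships follow immediately and the corollary drops out of Theorem~\ref{prop:main_result1}.
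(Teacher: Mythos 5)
Your proposal is correct and takes exactly the paper's approach: the paper proves the corollary in one line by feeding the three items of Proposition~\ref{prop:propertiesJ} into Theorem~\ref{prop:main_result1} with $(a,b,b')=(-1,0,1)$, which is precisely the verification you carry out. The sign/direction bookkeeping you flag is indeed the only delicate point, and it is harmless because only $b-a$ and $b'$ enter the final bound, so reading the self-crossing interval as $[-1,0]$ or $[0,1]$ yields the same conclusion.
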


In order to prove Theorem~\ref{thm:newobstructionHFL} however, we need one more lemma.

\begin{lemma}
\label{lem:crossing change weak} 
Assume an $n$-component link $L$ can be split using $\wsp(L)=s+m$ crossing changes with $s$ self crossing changes and $m$ mixed crossing changes. 
Then $L$ can be converted into the split union of its components in $2s+m$ crossing changes. Furthermore, if the link $L$ is oriented, then the $2s$ self crossing changes can be taken to be $s$ positive and~$s$ negative crossing changes.
\end{lemma}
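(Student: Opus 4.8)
The plan is to split the conversion of $L$ into $\bigsqcup_{i} K_i$ into two stages. The first stage is just the given minimal splitting sequence, which turns $L$ into some completely split link $\bigsqcup_{i} K_i'$ using $s$ self and $m$ mixed crossing changes; here each $K_i'$ is the (possibly re-knotted) image of the $i$-th component after all of the crossing changes have been performed. The second stage then corrects the isotopy types of the components back to the originals, converting $\bigsqcup_i K_i'$ into $\bigsqcup_i K_i$ inside the disjoint balls.

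The observation that makes the second stage inexpensive is that the isotopy type of an individual component can only be altered by a self crossing change performed on that very component: a mixed crossing change changes how two components are linked but not the knot type of either one in isolation, and a self crossing change on $K_j$ has no effect on $K_i$ when $j \neq i$. Tracing the splitting sequence with this in mind, if $s_i$ denotes the number of self crossing changes performed on the $i$-th component, so that $\sum_i s_i = s$, then $K_i'$ is obtained from $K_i$ by exactly these $s_i$ self crossing changes. Reversing them turns $K_i'$ back into $K_i$; since in $\bigsqcup_i K_i'$ the component $K_i'$ already lies in its own ball, these reversals can be carried out locally inside that ball and leave both the splitting and the other components untouched. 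This produces $\bigsqcup_i K_i$ at the cost of $\sum_i s_i = s$ further self crossing changes, for a grand total of $(s+m) + s = 2s + m$ crossing changes.

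For the refinement in the oriented case, I would keep track of signs. Suppose the $s$ self crossing changes of the first stage consist of $p$ positive and $q$ negative ones, with $p + q = s$. Undoing a crossing change amounts to changing the same crossing back, so the reverse of a positive crossing change is a negative one and vice versa; hence the second stage contributes $q$ positive and $p$ negative self crossing changes. Combining the two stages, the $2s$ self crossing changes split as $p + q = s$ positive and $q + p = s$ negative, independently of the original distribution, which is exactly the claimed balance.

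The main point requiring care is the observation underpinning the second stage, namely that each component's knot type evolves only through self crossing changes on that component, so that $\bigsqcup_i K_i'$ and $\bigsqcup_i K_i$ differ by a controlled number of self crossing changes that can be localised in disjoint balls. Once this is in place, the counting and the sign bookkeeping are entirely routine.
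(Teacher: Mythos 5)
Your proposal is correct and follows essentially the same route as the paper: run the minimal splitting sequence to obtain $\bigsqcup_i K_i'$, note that the knot type of the $i$-th component is affected only by the $s_i$ self crossing changes performed on it, and then spend $s_i$ further self crossing changes (the reverses of the originals, hence of opposite sign) inside each ball to recover $K_i$, giving $2s+m$ total with $s$ positive and $s$ negative self crossing changes. The paper phrases the second stage via the Gordian distance between $K_i$ and $K_i'$, but the content is identical.
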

\begin{proof}
Using $\wsp(L)$ crossing changes, one can 
turn $L=K_1 \cup \ldots  \cup K_n$ into an $n$ component split link $K_1' \sqcup \ldots \sqcup K_n'$ for some knots $K_1',\ldots,K_n'$. Let $s_i$ be the number of crossing changes needed to pass from $K_i$ to $K_i'$ while splitting $L$. 
As $s=s_1+ \ldots +s_n$ and~$s_i$ is greater or equal to  the Gordian distance\footnote{This is the minimal number of crossing changes needed to pass from one given knot to another.} between $K_i$ and $K_i'$,  the link~$K_1' \sqcup \ldots \sqcup K_n'$ can be converted into~$K_1 \sqcup \ldots \sqcup K_n$ using $s$ additional crossing changes. In the case the links are oriented, then these last $s$ self crossing changes can be taken to  be of the opposite sign with respect to the $s$ self crossing changes performed on $L$. 
\end{proof}

We now prove Theorem~\ref{thm:newobstructionHFL} from the introduction.
First however, we recall some notation.
Given a splitting sequence for an oriented link $L = K_1 \cup \ldots\cup K_\ell$, we use $s_i$ (resp.~$m_{i,j}^+$) to denote the number of self crossing changes performed on $K_i$ (resp. the number of positive mixed crossing changes involving both $K_i$ and $K_j$).

\begin{proof}[Proof of Theorem \ref{thm:newobstructionHFL}]
We prove the contrapositive.
Use Lemma~\ref{lem:crossing change weak} to convert  $L$ into the split union of its components via $2s + m$ crossing changes, where exactly $s$ of these $2s$ self crossing changes are negative.
Applying the second item of Proposition~\ref{prop:propertiesJ}, we deduce that
\begin{equation}
\label{eq:InequalityTheorem}
 J_L(\underline{v}) \leq J_{\sqcup_{i=1}^\ell K_i}\left (\underline{v}-\sum_{i=1}^\ell \left[ s_i+\sum_{j \neq i} \varepsilon_{i,j}m_{i,j}^+\right] \underline{e}_i\right).
 \end{equation}
Recall from the introduction that if $K$ is a knot and $m \geq \nu^+(K)$, then $J_K(m)=0$~\cite[Definition 2.12 and Proposition 2.13]{OzsvathStipsiczSzabo}.
 Combining this with the third item of Proposition~\ref{prop:propertiesJ}, we see that the right hand side of~\eqref{eq:InequalityTheorem} vanishes if no $v_i$ satisfies~\eqref{eq:Inequalityvi}.
The 
assertion now follows since the $J$-function is non-negative.
\end{proof}
\begin{exa}\label{ex:Jfunct}
We use Theorem~\ref{thm:newobstructionHFL} to show that the family $L_t=K_t^1 \cup K_t^2$ of $2$-bridge links from Figure~\ref{fig:2bridge} has $\wsp(L_t)=t$. 
Since the $L_t$ are L-space links, their $J$--functions can be recovered from the potential function~\cite[Corollary 3.32]{BorodzikGorsky}. \footnote{Borodzik and Gorsky state this in terms of a symmetrized version $\Delta_L(t_1,\ldots,t_n) \in \Z[t_1^{\pm \frac{1}{2}},\ldots, t_n^{\pm \frac{1}{2}} ]$ of the Alexander polynomial for which they additionally fix a sign~\cite[Subsection 2.1]{BorodzikGorsky}. 
In other words, they are working with the potential function~$\nabla_L(t_1,\ldots,t_n)$.
Furthermore~\cite[Equation (3.3)]{BorodzikGorsky} implicitly makes use of~\cite[Theorem~1.1]{BenheddiCimasoni}
to obtain an equality, instead of an equality up to signs.}
Applying~\cite[Section~7.4]{BorodzikGorsky}, the potential function of~$L_t$ is
\[ \nabla_{L_t}(t_1,t_2) = (-1)^t \sum_{\vert i+\frac{1}{2}\vert + \vert j +\frac{ 1}{2} \vert \leq t} (-1)^{i+j} t_1^{i + \frac{1}{2}} t_2^{j+ \frac{1}{2}}. \]
If we set $\widetilde{J}_{L_t}(i,j):=J_{L_t}(i,j)-J_{K_t^1}(i)-J_{K_t^2}(j)$, then applying~\cite[Corollary 3.32]{BorodzikGorsky} and rearranging the sums of the corresponding generating function yields
\[ \widetilde{\mathbf{J}}_{L_t}(t_1,t_2) := \sum_{i,j} \widetilde{J}_{L_t}(i,j) t_1^i t_2^j =\sum_{j=0}^{t-1}\sum_{i=0}^{t-j-1}(-1)^{i+j + t + 1}\left(\sum_{k=0}^{2i}t_1^{k-i}\right)\left(\sum_{h=0}^{2j}t_2^{h-j}\right).\]
It follows from the above equalities and a tedious computation that the bound provided by Corollary \ref{cor:Jfunction} is at most $\lceil t/2\rceil$; see also~\cite[Figure 4]{Liu}.
Using successively that $L_t$ has unknotted components (as well as~$J_{\bigcirc}(v) = 0$ for $v\geq 0$, equivalently $\nu^+(\bigcirc)=0$), and the above computations, we obtain that for $r=0,\ldots, t-1$
\begin{equation}
\label{eq:ExampleHFRegion}
\widetilde{J}_{L_t}(r,t-1-r) = J_{L_t}(r,t-1-r) =1. 
\end{equation}
We already showed in Proposition~\ref{prop:stgeqsn} that~$\wsp(L)\leq t$.
By way of contradiction, assume that $\wsp(L) \leq t-1$, so that $s_1+s_2+m_{1,2}^+ \leq t-1$.
Since~$s_1 \leq t-1$, we apply~\eqref{eq:ExampleHFRegion} with~$r=s_1$ to obtain $J_{L_t}(s_1,t-1-s_1)=1$.
Since $J_{L_t}$ is non-increasing (by the first item of Proposition~\ref{prop:propertiesJ}), we deduce that $J_{L_t}(s_1,s_2+m_{1,2}^+) \geq 1$.
As $J_{L_t}(s_1,s_2+m_{1,2}^+) \neq 0$, Theorem \ref{thm:newobstructionHFL} applied to the sequence $(\varepsilon_{1,2},\varepsilon_{2,1})=(0,1)$ implies that either $s_1<s_1$ or~$s_2+m_{1,2}^+<s_2+m_{1,2}^+$.
This is a contradiction in both cases and thus $\wsp(L_t)=t$.
\end{exa}

\section{Homotopical obstructions}\label{sec:coveringcalculus}
\label{sec:Homotopy}

\subsection{Link homotopy}
\label{sub:Homotopy}

We show how the homotopy type of a link provides restrictions on its weak splitting number. 
Here, recall that two links~$L$ and~$L'$ are link-homotopic if and only if they are related by a  sequence of ambient isotopies and self crossing changes.
Furthermore, a link is nullhomotopic if it is link-homotopic to an unlink.

\begin{prop}
\label{prop:MilnorTriple}
If a link $L$ has pairwise vanishing linking numbers and is not null-homotopic, then either $\wsp(L)=\operatorname{sp}(L)$ or $3 \leq \wsp(L) \leq \operatorname{sp}(L)-1$.
\end{prop}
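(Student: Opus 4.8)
The plan is to prove this by analyzing the interplay between the splitting number $\operatorname{sp}(L)$ and the weak splitting number $\wsp(L)$ under the given hypotheses. The first observation I would make is the general inequality $\wsp(L) \leq \operatorname{sp}(L)$: any splitting sequence realizing $\operatorname{sp}(L)$ uses only mixed crossing changes, and the resulting split link is in particular a completely split union of knots, so it is a valid weak splitting sequence. Thus the only question is how far below $\operatorname{sp}(L)$ the value $\wsp(L)$ can drop, and whether the intermediate value $\operatorname{sp}(L)-1$ or $\operatorname{sp}(L)-2$ can actually be $1$ or $2$.

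The key structural tool is the hypothesis that $L$ is \emph{not} null-homotopic. I would argue that a minimal weak splitting sequence cannot consist entirely of self crossing changes: if it did, then $L$ would be link-homotopic to the resulting completely split union of knots, which is an unlink up to link-homotopy (since completely split links are null-homotopic), contradicting the assumption that $L$ is not null-homotopic. Hence every minimal weak splitting sequence contains at least one mixed crossing change, giving $m \geq 1$. Combined with the pairwise vanishing linking numbers, Lemma~\ref{lem:pre_linking_bound} tells us $m \equiv \sum_{i<j} \ell k(K_i,K_j) \equiv 0 \pmod 2$, so in fact $m \geq 2$; a single mixed crossing change is parity-obstructed.

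The heart of the argument is then the following dichotomy. Suppose $\wsp(L) < \operatorname{sp}(L)$. I would show that in this case $\wsp(L) \geq 3$. The idea is that if $\wsp(L) \in \{1,2\}$, I would derive that $L$ is either null-homotopic or splittable with few mixed crossing changes, forcing $\wsp(L) = \operatorname{sp}(L)$. Concretely: if $\wsp(L) \leq 2$, then since $m \geq 2$ we must have $m = 2$ and $s = 0$, so the weak splitting sequence uses exactly two mixed crossing changes and no self crossing changes. But a splitting sequence using only mixed crossing changes is also a strong splitting sequence, whence $\operatorname{sp}(L) \leq 2 = \wsp(L)$, and combined with $\wsp(L) \leq \operatorname{sp}(L)$ this gives $\wsp(L) = \operatorname{sp}(L)$, contradicting the assumption $\wsp(L) < \operatorname{sp}(L)$. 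Therefore $\wsp(L) < \operatorname{sp}(L)$ forces $\wsp(L) \geq 3$, establishing the inequality $3 \leq \wsp(L) \leq \operatorname{sp}(L)-1$ in that regime, while the complementary case is exactly $\wsp(L) = \operatorname{sp}(L)$.

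The main obstacle I anticipate lies in making the parity and link-homotopy arguments fully rigorous, especially in confirming that a weak splitting sequence with $s=0$ genuinely qualifies as a strong splitting sequence (one must check that no self crossing changes sneaked in and that the final link is split in the strong sense). The Lemma~\ref{lem:pre_linking_bound} parity statement is the crucial lever that upgrades $m \geq 1$ to $m \geq 2$, and the non-null-homotopy hypothesis is what rules out the all-self-crossing case; both must be invoked precisely. The rest is a clean case analysis on the pair $(s,m)$ subject to $s + m = \wsp(L)$ and the constraint $m$ even and positive.
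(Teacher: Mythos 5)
Your argument is correct and is essentially the paper's proof: both rest on the same three facts, namely that $m$ must be even by the parity statement of Lemma~\ref{lem:pre_linking_bound}, that $m\geq 1$ because a purely self-crossing splitting sequence would make $L$ null-homotopic, and that a sequence with $s=0$ is a genuine splitting sequence forcing $\operatorname{sp}(L)\leq\wsp(L)$ and hence equality. The only difference is cosmetic — you organize the case split around whether $\wsp(L)<\operatorname{sp}(L)$ rather than around whether a minimal sequence with $s=0$ exists — so nothing further is needed.
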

\begin{proof}
Write $\wsp(L)=s+m$, so that $L$ can be split using $s$ self crossing changes and~$m$ mixed crossing changes.
If one can find such a sequence with $s=0$, then there is a minimal splitting sequence without self crossing changes. Thus $\operatorname{sp}(L) \leq m=\wsp(L)$, which implies $\wsp(L)=\operatorname{sp}(L)$. 
Otherwise, every weak splitting sequence must have $s>0$ and $m$ even: indeed~$L$ has pairwise vanishing linking numbers.
Since~$\operatorname{sp}(L)$ must be even and since $m$ cannot be~$0$ (because~$L$ is not nullhomotopic), we immediately deduce that~$\wsp(L)=s+m \in \lbrace 3,\ldots, \operatorname{sp}(L)-1 \rbrace$, concluding the proof of the proposition.
\end{proof}

Since the Milnor invariants with non-repeating indices are invariant under link homotopy~\cite{Milnor}, Proposition~\ref{prop:MilnorTriple} can easily be applied in practice.
\begin{exa}
We show that the $3$-component link $L = L9a54$ has $\wsp(L) = 3$.
Since $L$ has vanishing pairwise linking numbers, the linking obstruction is ineffective, and in fact both the slice-torus bound and the signature bound give $2\leq \wsp(L) \leq 3$.

Since $L$ is known to have $\ssp(L)=4$~\cite{ChaFriedlPowell}, if we manage to show that~$L$ is not nullhomotopic, then Proposition~\ref{prop:MilnorTriple} will imply that $\wsp(L) = 3$. 
As~$L$ can be obtained from the Borromean rings $J$ via a single self crossing change, we obtain $\mu_{123}(L)=\mu_{123}(J)=1$ and thus $L$ is not nullhomotopic. 
We conclude that $ \wsp(L) = 3$, as claimed.
\end{exa}
 
The following lemma can be used to obstruct the existence of minimal weak splitting sequences without mixed crossing changes. 
\begin{prop}
\label{lem:Nullhomotopic}
Assume that the link $L$ can be completely split with only self crossing changes not involving a fixed component, say $K_1$. ~%
Then, $L\setminus K_1$ is null-homotopic in the complement of $K_1$.~%
In particular, each component of $L\setminus K_1$ is null-homotopic in $\bS^3\setminus K_1$.
\end{prop}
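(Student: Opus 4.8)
The plan is to reinterpret the splitting sequence as a link homotopy taking place entirely in the exterior of $K_1$. First I would recall that a self crossing change on a component $K_i$ (with $i \neq 1$) is realized by a homotopy of $K_i$ through itself, supported in a small ball, during which no two distinct components ever meet; since by hypothesis every crossing change in the splitting sequence is such a self crossing change and none involves $K_1$, the component $K_1$ is never crossed. Assembling all the crossing changes together with the ambient isotopies performed between them, I obtain a homotopy $H_t$ of $L$ from $L$ to a completely split link $K_1 \sqcup K_2' \sqcup \ldots \sqcup K_\ell'$, with two crucial features: the restriction $H_t|_{K_1}$ has no double points, so it is an \emph{isotopy} of $K_1$, and $H_t(K_1)$ remains disjoint from $H_t(L \setminus K_1)$ for all $t$.

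The key technical step is to make $K_1$ genuinely fixed, so that the homotopy of $L \setminus K_1$ lives in a single, unchanging complement. For this I would invoke the isotopy extension theorem to produce an ambient isotopy $\Phi_t$ of $\bS^3$ with $\Phi_0 = \id$ and $\Phi_t(K_1) = H_t(K_1)$, and then set $G_t := \Phi_t^{-1} \circ H_t$. By construction $G_t$ keeps $K_1$ fixed, and $G_t|_{L \setminus K_1}$ is a link homotopy whose image $\Phi_t^{-1}(H_t(L \setminus K_1))$ remains disjoint from $K_1$ at every time. Thus $G_t$ is a link homotopy of $L \setminus K_1$ inside the fixed complement $\bS^3 \setminus K_1$, ending at $\Phi_1^{-1}(K_2' \sqcup \ldots \sqcup K_\ell')$, a completely split link contained in a ball $B$ disjoint from $K_1$.

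To conclude, I would observe that a completely split link sitting in pairwise disjoint balls inside $B \subset \bS^3 \setminus K_1$ is null-homotopic there: each component is a loop in a simply connected ball, hence contractible, and because the balls are disjoint these contractions can be carried out simultaneously without distinct components ever meeting, yielding a link homotopy to the unlink. Composing with $G_t$ shows that $L \setminus K_1$ is null-homotopic in $\bS^3 \setminus K_1$, and the ``in particular'' clause follows at once, since each component is carried by this link homotopy into a ball and is therefore null-homotopic as a loop. I expect the main obstacle to be the bookkeeping of the second paragraph: one must argue carefully that the ambient isotopies used to stage successive crossing changes can be absorbed into $\Phi_t^{-1}$, so that ``null-homotopic in $\bS^3 \setminus K_1$'' refers throughout to one and the same complement rather than a moving one.
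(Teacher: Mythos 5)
Your proof is correct and follows essentially the same route as the paper's (which simply notes that a self crossing change in $L\setminus K_1$ preserves its link-homotopy type in $\bS^3\setminus K_1$, and that a knot lying in a ball is null-homotopic there). The isotopy-extension bookkeeping you carry out to keep $K_1$ literally fixed is exactly the detail the paper leaves implicit, and it is handled correctly.
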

\begin{proof}
A self crossing change in $L \setminus K_1$ does not change its homotopy type in~$\bS^3 \setminus~K_1$. 
As any knot in a $3$-manifold that sits inside a $3$-ball is null-homotopic, the result follows.
\end{proof}

\subsection{Covering link calculus}
\label{sub:Covering} 
We use covering link calculus to study $\wsp$. 
Given an $n$-component link $L=K_1 \cup \ldots \cup K_n$ with $K_i$ unknotted, one can form the $2$-fold cover $p \colon S^3 \to S^3$ branched along $K_i$.
The link $\widetilde{L}=p^{-1}(L \setminus K_i)$ is called the \textit{covering link} of~$L$ with respect to $K_i$.
For a proof of the next result, we refer to~\cite[Section~3]{ChaFriedlPowell}.
\begin{prop}
\label{cor:Covering}
Let $L=K_1 \cup \ldots \cup K_n$ be an $n$-component link with $K_i$ unknotted.
A crossing change not involving $K_i$ results in two crossing changes in the covering link.
In particular, if $L$ can be split via $k$ crossing changes not involving $K_i$, then $ \wsp(\widetilde{L}) \leq 2k$.
\end{prop}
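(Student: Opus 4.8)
The plan is to exploit the fact that, since $K_i$ is unknotted, the double branched cover $p \colon S^3 \to S^3$ of $S^3$ along $K_i$ is again $S^3$, and that $p$ restricts to an honest (unbranched) $2$-fold covering over the complement $S^3 \setminus K_i$. The entire argument then takes place away from the branch locus, where $p$ behaves like an ordinary double cover.

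First I would establish the local statement. A crossing change not involving $K_i$ is supported in a small ball $B \subset S^3 \setminus K_i$ meeting only components of $L \setminus K_i$. Since $B$ is contractible and disjoint from the branch locus, the restriction of $p$ over $B$ is the trivial $2$-fold cover, so $p^{-1}(B)$ consists of two disjoint balls $B_1$ and $B_2$, each mapped homeomorphically onto $B$ by $p$. The modification performed inside $B$ therefore lifts to a crossing change in each of $B_1$ and $B_2$; as these balls are disjoint, the two lifted crossing changes may be performed independently. This proves that a single crossing change not involving $K_i$ results in exactly two crossing changes in the covering link.

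Next I would iterate. Suppose $L = L^{(0)}, L^{(1)}, \ldots, L^{(k)}$ is a sequence of crossing changes, none of which involves $K_i$, with $L^{(k)}$ completely split. Since no crossing change touches $K_i$, the ambient $S^3$ and the branch locus $K_i$ stay fixed throughout, so the same covering map $p$ applies at every stage. By the local statement, each step lifts to two crossing changes upstairs, yielding a sequence of $2k$ crossing changes transforming $\widetilde L = \widetilde{L^{(0)}}$ into $\widetilde{L^{(k)}}$. It then remains to check that $\widetilde{L^{(k)}}$ is already completely split. Here I use that $L^{(k)}$ is completely split with $K_i$ appearing as an unknotted split component, so $L^{(k)} \setminus K_i$ lies in a ball $B' \subset S^3 \setminus K_i$ whose preimage $p^{-1}(B')$ is again two disjoint balls. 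Consequently $\widetilde{L^{(k)}}$ is a disjoint union, contained in two disjoint balls, of two copies of the completely split link $L^{(k)} \setminus K_i$; in particular $\widetilde{L^{(k)}}$ is completely split and $\wsp(\widetilde{L^{(k)}}) = 0$. Combining these facts gives $\wsp(\widetilde L) \leq 2k + \wsp(\widetilde{L^{(k)}}) = 2k$.

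The main obstacle is the careful bookkeeping of the lifts: one must verify that each crossing-change ball can be chosen disjoint from $K_i$, that its preimage really is a trivial pair of balls over which the modification lifts coherently, and that the covering link of the terminal split link is genuinely split rather than merely a link with vanishing linking numbers. Both points reduce to the triviality of $p$ over contractible regions disjoint from the branch locus, which is precisely where the hypothesis that $K_i$ is unknotted (so that the cover is $S^3$ and restricts to an ordinary double cover away from $K_i$) enters the argument.
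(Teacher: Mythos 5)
Your proof is correct and is essentially the standard covering-link-calculus argument: the paper itself gives no proof of this proposition, deferring to Cha--Friedl--Powell (Section 3), whose argument is precisely the one you describe (lift the crossing-change ball, disjoint from the branch locus, to two disjoint balls in the unbranched part of the cover, and check that the covering link of the terminal completely split link is completely split). No gaps.
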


We show how Proposition~\ref{cor:Covering} can be used in conjunction with Proposition~\ref{lem:Nullhomotopic}: the former obstructs the existence of self crossing sequences in knotted components, while the latter obstructs the existence of self crossing changes in unknotted components.

\begin{exa}
We claim that the link~$L= K_1 \cup K_2 =L7a3$ in~Figure \ref{fig:gens pi_1} has $\wsp(L)=2$.
First, an inspection of the diagram shows that~$\wsp(L) \leq 2$, and that~$\lk(K_1,K_2) =0$.
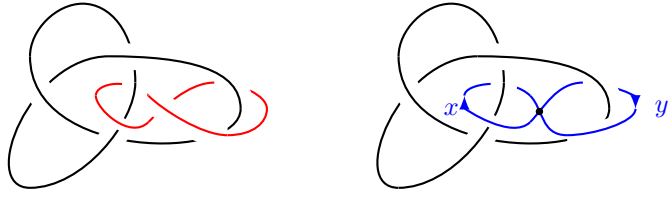
\begin{figure}[]
\centering
%

\begin{tikzpicture}[thick,scale = .7]

\draw (-2,-1.5) .. controls +(-1,0) and +(-1.5,0) ..(-.5,1) ;
\draw (2,0) .. controls +(0,.75) and +(1.5,0) ..(-.5,1) ;

\draw[red] (.25,.25) .. controls +(-.35,.35) and +(0,.25) .. (-.75, .25);
\draw[white, line width = 10] (-2,1) .. controls +(0,.5)  and +(-.5,0) .. (-1,2)  .. controls +(.5,0)  and +(0,1) ..(0,0.5) ;
\draw (-2,1) .. controls +(0,.5)  and +(-.5,0) .. (-1,2)  .. controls +(.5,0)  and +(0,1) ..(0,0.5) ;

\draw[red] (2.5,0) .. controls +(0,.25) and +(.5,0) .. (1.5, .5);
\draw[white, line width = 10] (2,0) .. controls +(0,.75) and +(1.5,0) ..(-.5,1) ;
\draw (2,0) .. controls +(0,.75) and +(1.5,0) ..(-.5,1) ;

\draw[white, line width = 10] (2,0) .. controls +(0,-1) and +(0,-2) ..(-2,1) ;
\draw (2,0) .. controls +(0,-1) and +(0,-2) ..(-2,1) ;

\draw[white, line width = 10] (0,0.5) .. controls +(0,-.75) and +(1,0) .. (-2,-1.5);
\draw (0,0.5) .. controls +(0,-.75) and +(1,0) .. (-2,-1.5);

\draw[red] (2.5,0) .. controls +(0,-.25) and +(.5,0) .. (1.75, -.5);

\draw[red] (.25,-.25) .. controls +(.25,.25) and +(-.5,0) .. (1.5, .5);
\draw[white, line width = 10] (.25,.25) .. controls +(.25,-.25) and +(-.5,0) .. (1.75, -.5);
\draw[red] (.25,.25) .. controls +(.25,-.25) and +(-.5,0) .. (1.75, -.5);

\draw[white, line width = 5] (.25,-.25) .. controls +(-.35,-.35) and +(0,-.25) .. (-.75, 0.25);
\draw[red] (.25,-.25) .. controls +(-.35,-.35) and +(0,-.25) .. (-.75, 0.25);

\begin{scope}[shift = {+(7,0)}]

\draw (-2,-1.5) .. controls +(-1,0) and +(-1.5,0) ..(-.5,1) ;
\draw (2,0) .. controls +(0,.75) and +(1.5,0) ..(-.5,1) ;

\draw[blue] (.5,.25) .. controls +(-.35,.35) and +(0,.25) .. (-.75, .25);
\draw[white, line width = 10] (-2,1) .. controls +(0,.5)  and +(-.5,0) .. (-1,2)  .. controls +(.5,0)  and +(0,1) ..(0,0.5) ;
\draw (-2,1) .. controls +(0,.5)  and +(-.5,0) .. (-1,2)  .. controls +(.5,0)  and +(0,1) ..(0,0.5) ;

\draw[blue,latex-] (2.5,0) .. controls +(0,.25) and +(.5,0) .. (1.5, .5);
\draw[white, line width = 10] (2,0) .. controls +(0,.75) and +(1.5,0) ..(-.5,1) ;
\draw (2,0) .. controls +(0,.75) and +(1.5,0) ..(-.5,1) ;

\draw[white, line width = 10] (2,0) .. controls +(0,-1) and +(0,-2) ..(-2,1) ;
\draw (2,0) .. controls +(0,-1) and +(0,-2) ..(-2,1) ;

\draw[white, line width = 10] (0,0.5) .. controls +(0,-.75) and +(1,0) .. (-2,-1.5);
\draw (0,0.5) .. controls +(0,-.75) and +(1,0) .. (-2,-1.5);

\draw[white, line width = 7]  (2.5,0) .. controls +(0,-.25) and +(.5,0) .. (1.2, -.5);
\draw[blue] (2.5,0) .. controls +(0,-.25) and +(.5,0) .. (1.2, -.5);

\draw[blue] (.5,-.25) .. controls +(.25,.25) and +(-.5,0) .. (1.5, .5);
\draw[blue] (.5,.25) .. controls +(.25,-.25) and +(-.5,0) .. (1.2, -.5);

\draw[white, line width = 5] (.5,-.25) .. controls +(-.35,-.35) and +(0,-.25) .. (-.75, 0.25);
\draw[blue, -latex] (.5,-.25) .. controls +(-.35,-.35) and +(0,-.25) .. (-.75, 0.25);

\draw[fill] (.675,-.05) circle (.05);
\node[blue] at (-1,0) {$x$};
\node[blue] at (3,0) {$y$};
\end{scope}
\end{tikzpicture}

\caption{A diagram of the link $L7a3$ (left), and a pair of generators of the fundamental group of its trefoil component $K_1$ (right).}\label{fig:gens pi_1}
\end{figure}
Furthermore, all the techniques illustrated in Sections 2 and 3 imply that~$1 \leq \wsp(L)$.

Assume, by contradiction, that $\wsp(L) =1$.~Since $L$ has vanishing linking numbers, any minimal splitting sequence 
is realised by a single self crossing change.
First, we show that the self crossing change cannot occur within the trefoil component $K_1$ of $L$. Denote by $\widetilde{L}$ the lift of $K_1$ to the double cover of $\mathbb{S}^3$ branched along $K_2$, see Figure~\ref{fig:doublecover7a3}.
Since $\widetilde{L}$ has linking number~$\pm 4$, Lemma~\ref{lem:pre_linking_bound} gives $\wsp(\widetilde{L}) \geq 4$, contradicting Proposition~\ref{cor:Covering}.
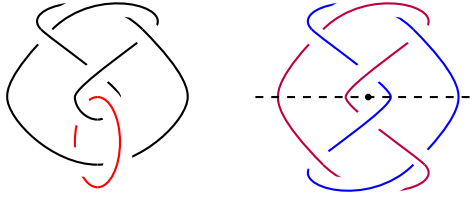
\begin{figure}[]
    \centering
    \begin{tikzpicture}[thick,scale = .3]

\draw[purple] (-2,3) .. controls +(-2, -2) and +(0,.25) .. (-4,0);
\draw[white, line width = 10]  (1,0) .. controls +(0,.5) and +(.5,-.5) .. (-2.5,3);
\draw[blue] (1,0) .. controls +(0,.5) and +(.5,-.5) .. (-2.5,3);
\draw[white, line width = 10] (-1,0) .. controls +(0,.5) and +(-.5,-.5) .. (2.5,3);
\draw[purple] (-1,0) .. controls +(0,.5) and +(-.5,-.5) .. (2.5,3);

\draw[white, line width = 10]  (-2.5,3) .. controls +(-1,1) and +(-2,2) .. (2,3);
\draw[blue]  (-2.5,3) .. controls +(-1,1) and +(-2,2) .. (2,3);

\draw[white, line width = 10]  (2.5,3) .. controls +(1,1) and +(2,2) .. (-2,3);
\draw[purple] (2.5,3) .. controls +(1,1) and +(2,2) .. (-2,3);

\draw[white, line width = 10] (2,3) .. controls +(2, -2) and +(0,.25) .. (4,0);
\draw[blue] (2,3) .. controls +(2, -2) and +(0,.25) .. (4,0);

\begin{scope}[rotate = 180]

\draw[blue] (-2,3) .. controls +(-2, -2) and +(0,.25) .. (-4,0);
\draw[white, line width = 10]  (1,0) .. controls +(0,.5) and +(.5,-.5) .. (-2.5,3);
\draw[purple] (1,0) .. controls +(0,.5) and +(.5,-.5) .. (-2.5,3);
\draw[white, line width = 10] (-1,0) .. controls +(0,.5) and +(-.5,-.5) .. (2.5,3);
\draw[blue] (-1,0) .. controls +(0,.5) and +(-.5,-.5) .. (2.5,3);

\draw[white, line width = 10]  (-2.5,3) .. controls +(-1,1) and +(-2,2) .. (2,3);
\draw[purple]  (-2.5,3) .. controls +(-1,1) and +(-2,2) .. (2,3);

\draw[white, line width = 10]  (2.5,3) .. controls +(1,1) and +(2,2) .. (-2,3);
\draw[blue] (2.5,3) .. controls +(1,1) and +(2,2) .. (-2,3);

\draw[white, line width = 10] (2,3) .. controls +(2, -2) and +(0,.25) .. (4,0);
\draw[purple] (2,3) .. controls +(2, -2) and +(0,.25) .. (4,0);

\end{scope}

\draw[dashed] (-5,0) -- (5,0);
\draw (0,0) circle (.1);
\draw[fill] (0,0) circle (.05);

\begin{scope}[shift = {+(-12,0)}]

\draw(-2,3) .. controls +(-2, -2) and +(0,.25) .. (-4,0);
\draw[white, line width = 10]  (1,0) .. controls +(0,.5) and +(.5,-.5) .. (-2.5,3);
\draw (1,0) .. controls +(0,.5) and +(.5,-.5) .. (-2.5,3);
\draw[white, line width = 10] (-1,0) .. controls +(0,.5) and +(-.5,-.5) .. (2.5,3);
\draw (-1,0) .. controls +(0,.5) and +(-.5,-.5) .. (2.5,3);

\draw[white, line width = 10]  (-2.5,3) .. controls +(-1,1) and +(-2,2) .. (2,3);
\draw (-2.5,3) .. controls +(-1,1) and +(-2,2) .. (2,3);

\draw[white, line width = 10]  (2.5,3) .. controls +(1,1) and +(2,2) .. (-2,3);
\draw (2.5,3) .. controls +(1,1) and +(2,2) .. (-2,3);

\draw[white, line width = 10] (2,3) .. controls +(2, -2) and +(0,.25) .. (4,0);
\draw (2,3) .. controls +(2, -2) and +(0,.25) .. (4,0);

\draw (0,-1) .. controls +(.5, 0) and +(0,-.5) .. (1,0);
\draw (0,-3) .. controls +(2, 0) and +(0,-1) .. (4,0);

\draw[white, line width = 10]  (0,0) arc (90:-90: 1 and 2);

\draw[red] (0,0) arc (90:-90: 1 and 2);

\draw[red] (0,0) arc (90:270: 1 and 2);

\draw[white, line width = 10] (0,-1) .. controls +(-.5, 0) and +(0,-.5) .. (-1,0);
\draw[white, line width = 10] (0,-3) .. controls +(-2, 0) and +(0,-1) .. (-4,0);
\draw (0,-1) .. controls +(-.5, 0) and +(0,-.5) .. (-1,0);
\draw (0,-3) .. controls +(-2, 0) and +(0,-1) .. (-4,0);
\end{scope}
\end{tikzpicture}
    \caption{A diagram of the link $L7a3$ (left), and a diagram for $\widetilde{L}$ (right).}
    \label{fig:doublecover7a3}
\end{figure}

It remains to show that $L$ cannot be split by a self crossing change within its unknotted component $K_2$.
By Proposition~\ref{lem:Nullhomotopic}, $K_2$ must be trivial in $\pi_1 (\mathbb{S}^3\setminus K_1)$, which admits~$\langle x,\, y\,\vert\, yxy = xyx \rangle$ as a Wirtinger presentation.
Here, $x$ and $y$ are the generators depicted in Figure~\ref{fig:gens pi_1}.
With respect to these generators, $K_2$ can be written as $xy^{-1}$ (or~$x^{-1}y$ depending on the orientation).
If $K_2$ were nullhomotopic then $x=y$, and thus~$\pi_1(\mathbb{S}^3\setminus K_1)=\Z$ which is absurd since $K_1$ is a trefoil.
Therefore, $\wsp(L)=2$.
\end{exa}

\section{The weak splitting number of small links}
\label{sec:Table}

Table \ref{tab:small_linksI} below lists $\wsp(L)$ for prime links with $9$ or fewer crossings.
Its second column indicates which of the previously described methods we use among the following:
\begin{itemize}
\item[(0)] non-splitness: the link is non-split and has $\wsp(L) \leq 1$;
\item[(1)] the linking number bound from Lemma~\ref{lem:pre_linking_bound};
\item[(2)] the slice-torus or signature bound, for the values of $\om\neq 1$ used see Table \ref{tab:angles}; \footnote{In Table~\ref{tab:angles}, we only list \textit{one} variable $\omega \in S^1$ for the multivariable signature $\sigma_L$: for these links, we are using~$\sigma_L(\omega,\ldots,\omega)$ and its relation to the Levine-Tristram signature at $\omega$; recall~\eqref{eq:CFProposition25}.}
\item[(3)] the Alexander polynomial obstructions from~\cite{BorodzikFriedlPowell};
\item[(4)] the covering link calculus obstruction from Proposition~\ref{cor:Covering};
\item[(5)] the homotopical considerations of Lemmas~\ref{prop:MilnorTriple} and~\ref{lem:Nullhomotopic};
\item[(6)] the combination of covering link calculus and the fundamental group obstruction.
\end{itemize}
\begin{table}
    \centering
    \begin{tabular}{cc|cc|cc}
       Link as in \cite{LinkInfo} &  $\theta$ & Link as in \cite{LinkInfo} &  $\theta$& Link as in \cite{LinkInfo} &  $\theta$\\
       \hline\hline
       $L9a52\{1,0\}$  & $3\pi/97$ &
       $L9n14\{0\}$ & $5\pi/19$&$L9n17\{0\}$& $59\pi/61$\\
        $L9n24\{1,0\}$& $2\pi/13$&$L9n28\{0,0\}$ & $2\pi/17$ && \\
       
    \end{tabular}
    \caption{The roots of unity~$\omega = e^{2i\theta}$ used to compute the signature bound of~Theorem~\ref{thm:SignatureIntro} for the entries marked with $\star$ in Table \ref{tab:small_linksI}.}\label{tab:angles}
\end{table}

\begin{table}[]
\begin{tabular}{ccc}
name & $\wsp$ & method \\
\hline\hline
L2a1 & 1 & (0)\\
L4a1 & 2 & (1)\\
L5a1 & 1 & (0)\\
L6a1 & 2 & (1)\\
L6a2 & 3 & (1)\\
L6a3 & 3 & (1)\\
L6a4 & 2 & (3)\\
L6a5 & 3 & (1)\\
L6n1 & 3 & (1)\\
L7a1 & 1 & (0)\\
L7a2 & 2 & (1)\\
L7a3 & 2 & (6)\\
L7a4 & 2 & (4)\\
L7a5 & 1 & (0)\\
L7a6 & 2 & (2)\\
L7a7 & 3 & (1)\\
L7n1 & 2 & (1)\\
L7n2 & 1 & (0)\\
L8a1 & 1 & (0)\\
L8a2 & 1 & (0)\\
L8a3 & 2 & (1)\\
L8a4 & 1 & (0)\\
L8a5 & 2 & (1)\\
L8a6 & 2 & (1)\\
L8a7 & 2 & (1)\\
L8a8 & 2 & (2)\\
L8a9 & 2 & (3)\\
L8a10 & 3 & (1)\\
L8a11 & 3 & (1)\\
L8a12 & 4 & (1)\\
L8a13 & 4 & (1)\\
L8a14 & 4 & (1)\\
L8a15 & 3 & (1)\\
L8a16 & 3 & (3)\\
L8a17 & 4 & (1)\\
L8a18 & 4 & (1)\\
L8a19 & 2 & (1)\\
L8a20 & 4 & (1)\\
L8a21 & 4 & (1)\\
L8n1  & 2 & (1)\\
L8n2  & 1 & (0)\\
L8n3  & 4 & (1)\\
L8n4  & 4 & (1)\\
L8n5  & 2 & (1)\\
L8n6  & 4 & (1)\\
L8n7  & 4 & (1)\\
L8n8  & 4 & (1)\\
\end{tabular}
\begin{tabular}{|ccc}
name & $\wsp$ & method \\
\hline\hline
L9a1  & 1 & (0)\\
L9a2  & 1 & (0)\\
L9a3  & 1 & (0)\\
L9a4  & 2 & (6)\\
L9a5  & 2 & (1)\\
L9a6  & 2 & (1)\\
L9a7  & 2 & (1)\\
L9a8  & 2 & (6)\\
L9a9  & 2 & (4) \\
L9a10 & 2 & (6)\\
L9a11 & 2 & (1)\\
L9a12 & 2 & (1)\\
L9a13 & 2 & (1)\\
L9a14 & 2 & (6)\\
L9a15 & 2 & (1)\\
L9a16 & 2 & (1)\\
L9a17 & 2 & (6)\\
L9a18 & 2 & (6)\\
L9a19 & 2 & (1)\\
L9a20 & 2 & (3)\\
L9a21 & 1 & (0)\\
L9a22 & 2 & (3)\\
L9a23 & 3 & (1)\\
L9a24 & 2 & (3)\\
L9a25 & 2 & (3)\\
L9a26 & 2   & (2)\\
L9a27 & 1   & (0)\\
L9a28 & 3   & (1)\\
L9a29 & 2/3 & (2)\\
L9a30 & 2/3 & (2)\\
L9a31 & 1   & (0)\\
L9a32 & 3   & (1)\\
L9a33 & 3   & (1)\\
L9a34 & 2   & (1)\\
L9a35 & 2   & (3)\\
L9a36 & 3   & (2)\\
L9a37 & 2   & (1)\\
L9a38 & 1   & (0)\\
L9a39 & 2   & (1)\\
L9a40 & 2   & (2)\\
L9a41 & 2   & (1)\\
L9a42 & 2   & (3)\\
L9a43 & 3   & (1)\\
L9a44 & 3   & (1)\\
L9a45 & 3   & (1)\\
L9a46 & 2   & (2)\\
L9a47 & 3   & (3)\\
\end{tabular}
\begin{tabular}{|ccc}
name & $\wsp$ & method \\
\hline\hline
L9a48 & 4   & (1)\\
L9a49 & 4   & (1)\\
L9a50 & 3   & (2)\\
L9a51 & 4   & (1)\\
L9a52 & 3   & (2)$^{\star}$\\
L9a53 & 2   & (4)\\
L9a54 & 3   & (2)\&(5)\\
L9a55 & 4   & (1)\\
L9n1 & 2 & (1)\\
L9n2 & 2 & (6)\\
L9n3 & 1 & (0)\\
L9n4 & 2 & (1)\\
L9n5 & 2 & (2)\\
L9n6 & 1 & (0)\\
L9n7 & 2 & (1)\\
L9n8 & 1 & (0)\\
L9n9 & 2 & (1) \\
L9n10 & 2 & (1)\\
L9n11 & 2 & (1)\\
L9n12 & 2 & (1)\\
L9n13 & 1 & (0)\\
L9n14 & 2 & (2)$^{\star }$\\
L9n15 & 3 & (1)\\
L9n16 & 3 &  (1)\\
L9n17 & 2 & (2)$^{\star }$\\
L9n18 & 4 & (1)\\
L9n19 & 4 & (1)\\
L9n20 & 3 & (1)\\
L9n21 & 3 & (1)\\
L9n22 & 3 & (1)\\
L9n23 & 3 & (2)\\
L9n24 & 3 & (2)$^{\star }$\\
L9n25 & 2 & (3)\\
L9n26 & 3 & (2)$^{ }$\\
L9n27 & 1 & (0)\\
L9n28 & 3 & (2)$^{\star }$\\
 &  & \\
 &   & \\
  &  & \\
 &   & \\
  &  & \\
 &   & \\
  &  & \\
 &   & \\
  &  & \\
 &   & \\
  &  & \\
\end{tabular}
\caption{Weak splitting numbers of prime links with $9$ or fewer crossings.
For the entries with a $\star$, the values of $\omega$ used to compute the signature bound are listed in Table~\ref{tab:angles}.
}\label{tab:small_linksI}
\end{table}

\bibliographystyle{plain}
\bibliography{biblioweaksplitting}

\begin{thebibliography}{10}

\bibitem{Adams96}
Colin Adams.
\newblock Splitting versus unlinking.
\newblock {\em J. Knot Theory Ramifications}, 5(3):295--299, 1996.

\bibitem{BatsonSeed}
Joshua Batson and Cotton Seed.
\newblock A link-splitting spectral sequence in {K}hovanov homology.
\newblock {\em Duke Math. J.}, 164(5):801--841, 2015.

\bibitem{BeliakovaWehrli}
Anna Beliakova and Stephan Wehrli.
\newblock Categorification of the colored {J}ones polynomial and {R}asmussen
  invariant of links.
\newblock {\em Canad. J. Math.}, 60(6):1240--1266, 2008.

\bibitem{BenardConway}
L\'eo B\'enard and Anthony Conway.
\newblock {A} multivariable {C}asson-{Lin} type invariant.
\newblock {\em Ann. Inst. Fourier}, May 2018.
\newblock To appear.

\bibitem{BenheddiCimasoni}
Mounir Benheddi and David Cimasoni.
\newblock Link {F}loer homology categorifies the {C}onway function.
\newblock {\em Proc. Edinb. Math. Soc. (2)}, 59(4):813--836, 2016.

\bibitem{BorodzikFriedlPowell}
Maciej Borodzik, Stefan Friedl, and Mark Powell.
\newblock Blanchfield forms and {G}ordian distance.
\newblock {\em J. Math. Soc. Japan}, 68(3):1047--1080, 2016.

\bibitem{BorodzikGorsky}
Maciej Borodzik and Eugene Gorsky.
\newblock Immersed concordances of links and {H}eegaard {F}loer homology.
\newblock {\em Indiana Univ. Math. J.}, 67(3):1039--1083, 2018.

\bibitem{CavalloTau}
Alberto Cavallo.
\newblock The concordance invariant tau in link grid homology.
\newblock {\em Algebr. Geom. Topol.}, 18(4):1917--1951, 2018.

\bibitem{CavalloCollari}
Alberto Cavallo and Carlo Collari.
\newblock Slice-torus concordance invariants and {W}hitehead doubles of links.
\newblock {\em Canad. J. Math.}, page 1–31, 2018.
\newblock To appear.

\bibitem{ChaFriedlPowell}
Jae~Choon Cha, Stefan Friedl, and Mark Powell.
\newblock Splitting numbers of links.
\newblock {\em Proc. Edinb. Math. Soc. (2)}, 60(3):587--614, 2017.

\bibitem{LinkInfo}
Jae~Choon Cha and Charles Livingston.
\newblock Linkinfo:~table of knot invariants, October 2019.
\newblock http://www.indiana.edu/$\sim$linkinfo.

\bibitem{CimasoniConwayZacharova}
David Cimasoni, Anthony Conway, and Kleopatra Zacharova.
\newblock Splitting numbers and signatures.
\newblock {\em Proc. Amer. Math. Soc.}, 144(12):5443--5455, 2016.

\bibitem{CimasoniFlorens}
David Cimasoni and Vincent Florens.
\newblock Generalized {S}eifert surfaces and signatures of colored links.
\newblock {\em Trans. Amer. Math. Soc.}, 360(3):1223--1264 (electronic), 2008.

\bibitem{ConwayThesis}
Anthony Conway.
\newblock {\em Invariants of colored links and generalizations of the {B}urau
  representation}.
\newblock PhD thesis, Universit\'e de Gen\`eve, 2017.
\newblock
  \href{http://dx.doi.org/10.13097/archive-ouverte/unige:99352}{10.13097/archive-ouverte/unige:99352}.

\bibitem{Cromwell}
Peter Cromwell.
\newblock Homogeneous links.
\newblock {\em J. London Math. Soc. (2)}, 39(3):535--552, 1989.

\bibitem{CromwellBook}
Peter Cromwell.
\newblock {\em Knots and links}.
\newblock Cambridge University Press, Cambridge, 2004.

\bibitem{GorskyNemethi}
Eugene Gorsky and Andr\'{a}s N\'{e}methi.
\newblock Lattice and {H}eegaard {F}loer homologies of algebraic links.
\newblock {\em Int. Math. Res. Not. IMRN}, (23):12737--12780, 2015.

\bibitem{HomWu}
Jennifer Hom and Zhongtao Wu.
\newblock Four-ball genus bounds and a refinement of the
  {O}zv\'{a}th-{S}zab\'{o} tau invariant.
\newblock {\em J. Symplectic Geom.}, 14(1):305--323, 2016.

\bibitem{Jeong}
Gahye Jeong.
\newblock {A} family of link concordance invariants from perturbed sl(n)
  homology, August 2016.
\newblock
  \href{http://arxiv.org/abs/1608.05781}{http://arxiv.org/abs/1608.05781}.

\bibitem{Kawamura15}
Tomomi Kawamura.
\newblock An estimate of the {R}asmussen invariant for links and the
  determination for certain links.
\newblock {\em Topology Appl.}, 196(part B):558--574, 2015.

\bibitem{Lewark}
Lukas Lewark.
\newblock Rasmussen's spectral sequences and the
  {$\mathfrak{sl}_N$}-concordance invariants.
\newblock {\em Adv. Math.}, 260:59--83, 2014.

\bibitem{Liu}
Beibei Liu.
\newblock Four-genera of links and {H}eegaard {F}loer homology.
\newblock {\em Algebr. Geom. Topol.}, 19(7):3511--3540, 2019.

\bibitem{LivingstonSliceTorus}
Charles Livingston.
\newblock Computations of the {O}zsv\'{a}th-{S}zab\'{o} knot concordance
  invariant.
\newblock {\em Geom. Topol.}, 8:735--742, 2004.

\bibitem{Livingston}
Charles Livingston.
\newblock Splitting numbers of links and the four-genus.
\newblock {\em Proc. Amer. Math. Soc.}, 146(1):421--427, 2018.

\bibitem{Lobb}
Andrew Lobb.
\newblock A slice genus lower bound from {${\rm sl}(n)$} {K}hovanov-{R}ozansky
  homology.
\newblock {\em Adv. Math.}, 222(4):1220--1276, 2009.

\bibitem{Surgeries}
Ciprian Manolescu and Peter Ozsv\'ath.
\newblock Heegaard {F}loer homology and integer surgeries on links, November
  2010.
\newblock
  \href{https://arxiv.org/abs/1011.1317}{https://arxiv.org/abs/1011.1317}.

\bibitem{Milnor}
John Milnor.
\newblock Link groups.
\newblock {\em Ann. of Math. (2)}, 59:177--195, 1954.

\bibitem{OzsvathStipsiczSzabo}
Peter Ozsv\'{a}th, Andr\'{a}s Stipsicz, and Zolt\'{a}n Szab\'{o}.
\newblock Concordance homomorphisms from knot {F}loer homology.
\newblock {\em Adv. Math.}, 315:366--426, 2017.

\bibitem{OzsvathSzabo}
Peter Ozsv\'{a}th and Zolt\'{a}n Szab\'{o}.
\newblock Holomorphic disks and knot invariants.
\newblock {\em Adv. Math.}, 186(1):58--116, 2004.

\bibitem{OzsvathSzaboLinks}
Peter Ozsv\'{a}th and Zolt\'{a}n Szab\'{o}.
\newblock Holomorphic disks, link invariants and the multi-variable {A}lexander
  polynomial.
\newblock {\em Algebr. Geom. Topol.}, 8(2):615--692, 2008.

\bibitem{Rasmussen}
Jacob Rasmussen.
\newblock Khovanov homology and the slice genus.
\newblock {\em Invent. Math.}, 182(2):419--447, 2010.

\bibitem{Shimizu}
Ayaka Shimizu.
\newblock The complete splitting number of a lassoed link.
\newblock {\em Topology Appl.}, 159(4):959--965, 2012.

\bibitem{Wu}
Hao Wu.
\newblock On the quantum filtration of the {K}hovanov-{R}ozansky cohomology.
\newblock {\em Adv. Math.}, 221(1):54--139, 2009.

\end{thebibliography}

\end{document}